\documentclass[11pt]{article}

\usepackage[margin=1.1in]{geometry}
\usepackage{amsmath,amssymb,amsthm,mathtools}
\usepackage{bm}
\usepackage{graphicx}
\usepackage{booktabs}
\usepackage{enumitem}
\usepackage{xcolor}
\usepackage[colorlinks=true,linkcolor=blue!60!black,citecolor=blue!60!black,urlcolor=blue!60!black]{hyperref}
\usepackage[capitalise,noabbrev]{cleveref}

\newtheorem{theorem}{Theorem}[section]
\newtheorem{proposition}[theorem]{Proposition}
\newtheorem{corollary}[theorem]{Corollary}
\newtheorem{lemma}[theorem]{Lemma}
\theoremstyle{definition}
\newtheorem{definition}[theorem]{Definition}
\newtheorem{example}[theorem]{Example}

\theoremstyle{remark}
\newtheorem{remark}[theorem]{Remark}

\newcommand{\R}{\mathbb{R}}
\newcommand{\dd}{\mathrm{d}}
\newcommand{\bz}{\bm{z}}
\newcommand{\bx}{\bm{x}}
\newcommand{\bE}{\bm{E}}
\newcommand{\bH}{\bm{H}}
\newcommand{\bK}{\bm{K}}
\newcommand{\bL}{\bm{L}}
\newcommand{\bJ}{\bm{J}}
\newcommand{\bP}{\bm{P}}
\newcommand{\bM}{\bm{M}}
\newcommand{\bI}{\bm{I}}
\newcommand{\chixx}{\chi(\bx)}
\newcommand{\Lie}{\mathcal{L}}
\newcommand{\Do}{\mathcal{D}}
\newcommand{\So}{\mathcal{S}}
\newcommand{\Go}{\mathcal{G}}
\DeclareMathOperator{\diag}{diag}
\DeclareMathOperator{\ran}{ran}
\DeclareMathOperator{\curl}{curl}

\begin{document}

\title{\bfseries Elucidating the Conformal Structure of the Brinkman Penalisation Method for Geometry-Adapted, Structure-Preserving Operator Learning of Hamiltonian PDEs}
\author{Teo Deveney\thanks{Department of Computer Science, University of
Bath, United Kingdom.}%
\and
Baige Xu\thanks{Center for Advanced Intelligence Project, RIKEN,
Japan.}%
\and
Takaharu Yaguchi\thanks{Kyushu University / Center for Advanced
Intelligence Project, RIKEN, Japan
(\texttt{yaguchi@math.kyushu-u.ac.jp}).}}

\date{}
\maketitle

\begin{abstract}
The Brinkman penalisation method embeds boundary-value problems on complex domains into a simple computational box by modeling the solid region as a strongly dissipative medium, avoiding body-fitted mesh generation. We show that multi-symplectic Hamiltonian PDEs regularised by Brinkman-type penalisation retain a multi-conformal symplectic structure under a compatibility condition linking the symplectic matrix and the penalisation projection. This yields an exact local conservation law, under which the multi-symplectic two-form is conserved in the fluid region and decays exponentially inside the solid. The linear wave equation with Brinkman friction and Maxwell’s equations with artificial Ohmic conductivity satisfy this condition, with explicit modified Hamiltonian densities. Building on this, we propose (i) structure-preserving numerical integrators via Strang splitting that satisfy a discrete conformal conservation law, and (ii) conformal symplectic neural operators that interleave exact dissipative flows with learnable multi-symplectic evolution operators, allowing geometry-dependent operator learning. Numerical experiments on wave and electromagnetic scattering demonstrate that our methods reproduce correct local energy budgets and avoid unphysical energy drift, providing a principled framework for physics-consistent scientific machine learning on complex domains.
\end{abstract}

\medskip
\noindent\textbf{Keywords.}
Brinkman penalisation, Hamiltonian PDEs, multi-symplectic structure,
conformal symplectic structure, Maxwell's equations,
structure-preserving numerical methods, operator learning, scientific
machine learning.

\smallskip
\noindent\textbf{MSC codes.} 37K05, 65M06, 65P10, 65M85, 68T07, 78M20.

\section{Introduction}
\label{sec:intro}

Scientific machine learning (SciML), the fusion of scientific
computing and machine learning, has produced a rapidly growing family
of methods for the simulation and identification of physical systems,
including physics-informed neural networks
\cite{raissi2019physics}, neural operators such as DeepONet
\cite{lu2021learning} and the Fourier neural operator (FNO)
\cite{li2021fourier}, and data-driven system identification techniques
based on the Koopman operator, dynamic mode decomposition, sparse
regression, and Hamiltonian neural networks
\cite{schmid2010dynamic,brunton2016discovering,greydanus2019hamiltonian}.
A principal promise of these methods is the acceleration of physical
simulation and, consequently, of design and product-development
workflows in engineering.

Two obstacles, however, persistently limit the deployment of
operator-learning methods on realistic problems.

\paragraph{Complex domains.}
Solution operators of boundary-value problems depend on the domain
$\Omega$ on which the equations are posed, however learning such operators so that they generalize over families of domains is difficult. Classical discretisations based on body-fitted meshes require costly mesh generation and remeshing whenever the shape changes, and neural surrogates trained on a fixed geometry typically require retraining for each new shape.  The \emph{Brinkman penalisation method} \cite{angot1999penalisation,kolomenskiy2009fourier} circumvents
body-fitted meshing altogether by modelling the solid region $\Omega_s$
as a dissipative medium with boundary condition imposed
approximately by a penalty term supported on $\Omega_s$. This allows the
problem to be solved on a simple Cartesian grid regardless of geometry by choosing an appropriate indicator function penalty. 
\begin{equation}
\label{eq:indicator}
\chixx \;=\;
\begin{cases}
1, & \bx\in\Omega_s \quad\text{(solid domain)},\\[2pt]
0, & \bx\in\Omega_f \quad\text{(fluid domain)},
\end{cases}
\end{equation}
which, crucially for operator learning, can itself be supplied as an
input field to a neural operator.

\paragraph{Data quality.}
The training data used in SciML are frequently generated by
conventional numerical methods that do not respect the physical
invariants of the underlying equations (e.g explicit Euler timestepping typically exhibit systematic energy drift).
Errors of this kind are often present within training data, and models
trained on this physically inaccurate data inherit and often amplify these defects, leading to failure in long-term prediction.  A more subtle issue is that models that do not respect physical structure may fit such flawed data better than structure-preserving models do, meaning naive empirical risk minimisation actively leads to emulating physically incorrect behaviour.
Structure-preserving (geometric) numerical integration
\cite{hairer2006geometric} addresses this issue at the source by
preserving invariants and geometric structures at the discrete level.
This guarantees stability and physical fidelity in long-time
integration, and thereby yields high-quality training data.  The same
philosophy has motivated structure-preserving network architectures
such as SympNets \cite{jin2020sympnets}, which employ symplectic maps
as layers.

\paragraph{Contributions.}
This paper addresses these challenges for the case of general
Hamiltonian PDEs. Our starting point is a demonstration that Brinkman-type penalisations of multi-symplectic Hamiltonian PDEs are more than a convenient penalisation method and, in fact, possesses an exact geometric structure of its own. This observation then enables the subsequent study and algorithmic design through the lens of appropriate structure preserving methods that we present in this work. More specifically, our contributions are as follows.
\begin{enumerate}[leftmargin=2em]
\item \emph{Geometry of penalised Hamiltonian PDEs
(\cref{sec:structure}).}
We consider multi-symplectic PDEs
$\bK\bz_t+\sum_k\bL_k\bz_{x_k}=\nabla S(\bz)$ in the sense of Bridges
and Reich \cite{bridges1997multi,bridges2001multi}, penalised by a
friction term $-(\chixx/\eta)\,\bP\bK\bz$ that damps the subset of
evolution equations selected by an orthogonal projection $\bP$.  Our
main result (\cref{thm:general}) states that whenever the
\emph{compatibility condition}
\begin{equation*}
\bK \;=\; \bP\bK + \bK\bP
\end{equation*}
holds, the penalised system is
multi-conformal symplectic in the sense of
\cite{mclachlan2001conformal,moore2013conformal}, with the explicitly
modified Hamiltonian density
$S_\chi(\bz)=S(\bz)+\frac{\chixx}{4\eta}\,
\bz^{\mathsf T}(\bI-2\bP)\bK\bz$, and its solutions obey
\begin{equation*}
\partial_t\omega + \sum_{k}\partial_{x_k}\kappa_k
= -\frac{\chixx}{\eta}\,\omega.
\end{equation*}
That is, the symplectic density is conserved in the fluid region and
decays at the exponential rate $1/\eta$ in the solid region.  The
linear wave equation with the classical Brinkman friction,
and Maxwell's equations penalised by an
artificial Ohmic conductivity both satisfy the
compatibility condition. 
\item \emph{Structure-preserving integrators
(\cref{sec:integrators}).}
Motivated by the structure identified above, we construct numerical
methods for the whole penalised class by Strang splitting. In our scheme the dissipative part of the flow is a pointwise linear ODE that can be integrated exactly, and the conservative part is treated by any multi-symplectic integrator.  A key result
(\cref{lem:dissipative-contraction}) shows that, under the
compatibility condition, the exact geometry-aware dissipative map contracts the symplectic density pointwise by exactly $e^{-\chi\tau/\eta}$.
Consequently, the composed schemes satisfy a discrete conformal
multi-symplectic conservation law (\cref{prop:discrete-law}) and
reproduce the correct local energy budget of the penalised equations.
\item \emph{Conformal symplectic neural operators
(\cref{sec:operators}).}
Lifting the SympNet approach \cite{jin2020sympnets} from finite-dimensional phase spaces to
function spaces, we propose neural operators of the form
$\Go_\theta = \Do^{\chi,\eta}_{\Delta t/2}\circ\So_{\theta,\Delta
t}\circ\Do^{\chi,\eta}_{\Delta t/2}$, in which the exact penalisation
semigroup $\Do^{\chi,\eta}$ sandwiches a learnable multi-symplectic
evolution operator $\So_\theta$.  Such operators satisfy the conformal
symplectic conservation law
$\omega^{n+1}(\bx)=e^{-\alpha(\bx)\Delta t}\,\omega^{n}(\bx)$ with
$\alpha=\chi/\eta$, up to the exact discrete flux by
construction, for every admissible geometry $\chi$ and for every PDE
in the penalised class; the indicator function is an input, so a
single trained model covers families of domains.
\end{enumerate}
Numerical experiments in \cref{sec:experiments} illustrate each
component for both worked examples.  For the wave equation we verify
the convergence of the penalised model to the Dirichlet problem as
$\eta\to0$ and the second-order accuracy of the proposed integrator,
demonstrate its exact energy budget on two-dimensional scattering by
obstacles (where a non-geometric explicit baseline exhibits
catastrophic energy growth), and present an operator-learning
comparison on wave propagation past airfoil-shaped scatterers in which
the proposed conformal symplectic neural operator stays close to the discrete
energy of the ground truth where a standard FNO exhibits significant energy growth. For Maxwell's equations we verify the
expulsion of the electric field from penalised conductors as
$\eta\to0$, cross-validate the transverse magnetic mode against the
penalised wave equation (the two structure-preserving splittings are
observed to converge to each other at first order in $\Delta t$, the
formal order of their splitting-arrangement difference), confirm the
exact Poynting budget of the conformal split-Yee scheme on fully
three-dimensional scattering by a conducting sphere, and demonstrate
the electromagnetic CSNO on operator learning for three-dimensional
scattering by randomly sampled conducting obstacles, where it reduces
the rollout error of an FNO baseline by an order of
magnitude in comparison to the baseline which exhibits sharper energy growth.

\paragraph{Related work.}
Multi-symplectic structures of Hamiltonian PDEs and their
discretisations originate in
\cite{bridges1997multi,bridges2001multi,marsden1998multisymplectic};
box-type multi-symplectic schemes are analyzed in
\cite{bridges2001multi,moore2003backward}, and multi-symplectic
methods for Maxwell's equations in particular are studied in
\cite{sun2019maxwell}.  Conformal symplectic (linearly
damped Hamiltonian) systems were characterised by McLachlan and
Perlmutter \cite{mclachlan2001conformal}, and conformal conservation
laws with associated integrators for damped Hamiltonian PDEs with spatially constant damping 
were developed by Moore and collaborators
\cite{moore2013conformal,bhatt2016conformal}.  Our contribution develop on these works with in three ways. Firstly, we identify a general algebraic criterion under which partial damping (damping of a subset of components) is conformal after an explicit
modification of the Hamiltonian density. Second is the observation that the Brinkman penalisation method (which was originally conceived purely for computational convenience) is a member of this class with a spatially inhomogeneous damping coefficient $\alpha(\bx)=\chixx/\eta$. Finally, we exploit these properties for both numerical integration and operator learning.  On the learning side, structure-preserving architectures for finite-dimensional Hamiltonian systems include
Hamiltonian neural networks \cite{greydanus2019hamiltonian} and
SympNets \cite{jin2020sympnets}; neural operators for PDEs include
\cite{lu2021learning,li2021fourier,kovachki2023neural}.  The
architecture proposed here can be viewed as a conformal, infinite
dimensional analogue of SympNets adapted to penalised complex-domain
problems to allow flexible geometry adaptation. 

\section{Preliminaries}
\label{sec:preliminaries}

\subsection{Brinkman penalisation}
\label{subsec:brinkman}

Let $\Omega=\Omega_f\cup\Omega_s\subset\R^d$ be a simple computational
box decomposed into a fluid region $\Omega_f$ and a solid region
$\Omega_s$ with indicator function \eqref{eq:indicator}, and consider
an evolutionary PDE posed on $\Omega_f$ with homogeneous boundary
conditions on $\partial\Omega_f$.  Boundary-fitted discretisations
require mesh generation adapted to $\partial\Omega_f$, which is
difficult and costly for complex shapes and must be repeated after
every shape change. The same issue reappears in some scientific machine learning applications as the need to
retrain models for each geometry.  The Brinkman penalisation method
\cite{angot1999penalisation} instead solves an equation on the whole
box $\Omega$ in which the solid is modeled as a strongly dissipative
medium. A friction term of size $1/\eta$, $0<\eta\ll1$, supported on
$\Omega_s$ drives the penalised fields to zero there, and the boundary
condition on $\partial\Omega_f$ is recovered in the limit
$\eta\to0$. Rigorous convergence estimates for penalisations of this
type can be found in \cite{angot1999penalisation,kolomenskiy2009fourier}.
The penalised equation can be discretised on a typical Cartesian grid, and the domain geometry enters only through $\chi$, which may serve directly as an
input channel of a neural operator.  Two prototypical examples used throughout the
paper are the wave equation with Brinkman friction,
\begin{equation}
\label{eq:wave-brinkman}
u_{tt} \;=\; c^2\Delta u \;-\; \frac{\chixx}{\eta}\,u_t ,
\end{equation}
in which the friction damps the velocity $u_t$, and Maxwell's
equations penalised by an artificial Ohmic conductivity,
\begin{equation}
\label{eq:maxwell-brinkman}
\bE_t \;=\; \curl \bH \;-\; \frac{\chixx}{\eta}\,\bE,
\qquad
\bH_t \;=\; -\,\curl \bE ,
\end{equation}
in nondimensional units $\varepsilon=\mu=1$, in which the solid is
modeled as a conductor obeying Ohm's law $\bm{J}=\sigma\bE$ with
conductivity $\sigma=\chixx/\eta$. As $\eta\to0$ the electric field is
expelled from $\Omega_s$ and the perfect-conductor boundary condition
$\bm{n}\times\bE=0$ on $\partial\Omega_f$ is recovered.  In both
cases the damping acts on a subset of the dynamical fields
($u_t$ but not $u$; $\bE$ but not $\bH$), which is an important feature that enables the analysis below.

\subsection{Multi-symplectic Hamiltonian PDEs}
\label{subsec:ms}

A \emph{multi-symplectic Hamiltonian PDE}
\cite{bridges1997multi,bridges2001multi} in $d-$dimensional space is a
first-order system
\begin{equation}
\label{eq:ms}
\bK \bz_t + \sum_{k=1}^{d}\bL_k \bz_{x_k} = \nabla_{\bz} S(\bz),
\qquad \bz(t,\bx)\in\R^n,
\end{equation}
where $\bK,\bL_1,\dots,\bL_d\in\R^{n\times n}$ are constant
skew-symmetric matrices and $S:\R^n\to\R$ is a smooth scalar function
(possibly depending on $\bx$ through prescribed coefficients).
Solutions of \eqref{eq:ms} satisfy the multi-symplectic
conservation law
\begin{equation}
\label{eq:ms-law}
\partial_t\omega + \sum_{k=1}^{d}\partial_{x_k}\kappa_k = 0,
\qquad
\omega = \tfrac12\,\dd\bz\wedge\bK\,\dd\bz,
\qquad
\kappa_k = \tfrac12\,\dd\bz\wedge\bL_k\,\dd\bz,
\end{equation}
where $\dd\bz$ denotes the solution of the variational equation
associated with \eqref{eq:ms}.  All Hamiltonian PDEs, such as the wave equation, nonlinear Schr\"odinger, Korteweg--de Vries, and Maxwell equation all can be written in the form \eqref{eq:ms}. A numerical scheme is referred to as multi-symplectic if a discrete analogue of \eqref{eq:ms-law} holds exactly for the discrete variational equation. For example the Preissman box scheme and the Euler box scheme
\cite{bridges2001multi,moore2003backward} are multi-symplectic numerical schemes.

\begin{example}[Wave equation]
\label{ex:wave-ms}
With $v=u_t$ and $w=c\,u_x$, the one-dimensional wave equation
$u_{tt}=c^2u_{xx}$ takes the form \eqref{eq:ms} with
\begin{equation}
\label{eq:wave-ms-matrices}
\bz=\begin{pmatrix}u\\ v\\ w\end{pmatrix},
\quad
\bK=\begin{pmatrix}0&-1&0\\ 1&0&0\\ 0&0&0\end{pmatrix},
\quad
\bL=\begin{pmatrix}0&0&c\\ 0&0&0\\ -c&0&0\end{pmatrix},
\quad
S(\bz)=\tfrac12\bigl(v^2-w^2\bigr).
\end{equation}
Indeed, the three rows of \eqref{eq:ms} read
$-v_t+c\,w_x = 0$, $u_t = v$, and $-c\,u_x=-w$.  The associated
densities are $\omega=\dd u\wedge\dd v$ and
$\kappa=c\,\dd u\wedge\dd w$.  The multidimensional wave equation is
analogous, with $w_k=c\,u_{x_k}$ and one matrix $\bL_k$ per spatial
direction.
\end{example}

\begin{example}[Maxwell's equations]
\label{ex:maxwell-ms}
Maxwell's equations in vacuum, $\bE_t=\curl\bH$, $\bH_t=-\curl\bE$,
take the form \eqref{eq:ms} with $n=6$,
\begin{equation}
\label{eq:maxwell-ms-matrices}
\bz=\begin{pmatrix}\bE\\ \bH\end{pmatrix},
\qquad
\bK=\begin{pmatrix}0&\bI_3\\ -\bI_3&0\end{pmatrix},
\qquad
\bL_k=\begin{pmatrix}\bm{C}_k&0\\ 0&\bm{C}_k\end{pmatrix},
\qquad
S(\bz)\equiv 0 ,
\end{equation}
where $(\bm{C}_k)_{ij}=\epsilon_{ikj}$ are the (skew-symmetric)
generators of the curl, $\sum_k \bm{C}_k\partial_{x_k}\bm{F}
=\curl\bm{F}$; see also
\cite{sun2019maxwell}.  Indeed,
$\bK\bz_t=(\bH_t,-\bE_t)^{\mathsf T}$ and
$\sum_k\bL_k\bz_{x_k}=(\curl\bE,\curl\bH)^{\mathsf T}$, so
\eqref{eq:ms} with $S\equiv0$ reproduces the two curl equations.  The
symplectic density is $\omega=\sum_{i=1}^3\dd E_i\wedge\dd H_i$.
\end{example}

\subsection{Conformal symplectic systems and their PDE analogue}
\label{subsec:conformal}

Structure preservation for dissipative systems is captured by
the notion of conformal symplecticity.  Following McLachlan and
Perlmutter \cite{mclachlan2001conformal}, a \emph{conformal
symplectic} (linearly damped Hamiltonian) system is
\begin{equation}
\label{eq:conformal-ode}
\dot{\bz} = \bJ\nabla H(\bz) - \alpha\,\bz ,
\qquad
\bJ=\begin{pmatrix}0&\bI\\ -\bI&0\end{pmatrix},
\quad \bz=(q,p)^{\mathsf T}\in\R^{2n},
\end{equation}
whose flow contracts the symplectic form at a uniform exponential
rate,
\begin{equation}
\label{eq:conformal-law-ode}
\Lie_X\,\omega = -2\alpha\,\omega
\quad\Longrightarrow\quad
\omega(t) = e^{-2\alpha t}\,\omega(0),
\qquad \omega = \dd q\wedge \dd p ,
\end{equation}
where $X$ is the vector field of \eqref{eq:conformal-ode}.  Conformal
symplectic methods are one-step maps that reproduce the exact
contraction \eqref{eq:conformal-law-ode} discretely
\cite{mclachlan2001conformal,bhatt2016conformal}.  The PDE analogue,
introduced for damped Hamiltonian PDEs in
\cite{moore2013conformal,bhatt2016conformal}, is given in the following definition.

\begin{definition}[Multi-conformal symplectic PDE]
\label{def:mcs}
Let $a=a(\bx)\ge0$ be a prescribed damping coefficient.  A PDE of the
form
\begin{equation}
\label{eq:mcs}
\bK\bz_t + \sum_{k=1}^{d}\bL_k\bz_{x_k}
= \nabla_{\bz} \widetilde S(\bz) - \frac{a(\bx)}{2}\,\bK\bz ,
\end{equation}
with skew-symmetric $\bK$, $\bL_k$ and smooth $\widetilde S$ (possibly
$\bx$-dependent through prescribed coefficients), is called
\emph{multi-conformal symplectic}.
\end{definition}

\begin{proposition}[Multi-conformal symplectic conservation law]
\label{prop:mcs-law}
Solutions of \eqref{eq:mcs} satisfy
\begin{equation}
\label{eq:mcs-law}
\partial_t \omega + \sum_{k=1}^{d}\partial_{x_k} \kappa_k
= -\,a(\bx)\,\omega ,
\end{equation}
with $\omega,\kappa_k$ as in \eqref{eq:ms-law}.
\end{proposition}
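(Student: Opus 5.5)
We follow the standard Bridges--Reich derivation of \eqref{eq:ms-law}, keeping track of the extra damping term. First take the variational equation of \eqref{eq:mcs}: since $a(\bx)$ and the matrices do not depend on $\bz$, a perturbation $\dd\bz$ of a solution satisfies
\begin{equation*}
\bK\,\dd\bz_t + \sum_{k=1}^{d}\bL_k\,\dd\bz_{x_k}
= \bm{A}(t,\bx)\,\dd\bz - \frac{a(\bx)}{2}\,\bK\,\dd\bz,
\qquad \bm{A}=\nabla^2_{\bz}\widetilde S(\bz),
\end{equation*}
where $\bm{A}$ is symmetric. We then wedge this equation on the left with $\dd\bz$, interpreting $\dd\bz\wedge\bm{B}\,\dd\bz=\sum_{i,j}B_{ij}\,\dd z_i\wedge\dd z_j$.

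Next we evaluate each term. Symmetry of $\bm{A}$ together with antisymmetry of $\wedge$ gives $\dd\bz\wedge\bm{A}\,\dd\bz=0$. For a constant skew-symmetric $\bm{B}$, the Leibniz rule and skew-symmetry give
\begin{equation*}
\partial_s\bigl(\dd\bz\wedge\bm{B}\,\dd\bz\bigr)
= \dd\bz_s\wedge\bm{B}\,\dd\bz+\dd\bz\wedge\bm{B}\,\dd\bz_s
= 2\,\dd\bz\wedge\bm{B}\,\dd\bz_s .
\end{equation*}
The identity uses $\dd\bz_s\wedge\bm{B}\,\dd\bz=\sum B_{ij}\,\dd z_{i,s}\wedge\dd z_j=-\sum B_{ij}\,\dd z_j\wedge \dd z_{i,s}=\dd\bz\wedge\bm{B}\,\dd\bz_s$. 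Hence $\dd\bz\wedge\bK\,\dd\bz_t=\partial_t\omega$ and $\dd\bz\wedge\bL_k\,\dd\bz_{x_k}=\partial_{x_k}\kappa_k$.

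The damping term contributes $-\frac{a}{2}\,\dd\bz\wedge\bK\,\dd\bz=-a\,\omega$. Since $a$ depends only on $\bx$ and not on $\bz$, no derivative of $a$ enters, even though $a$ is spatially inhomogeneous. Indeed, the $x_k$-derivatives act only on $\bL_k$ terms, which carry no $a$. Combining the three evaluations gives $\partial_t\omega+\sum_k\partial_{x_k}\kappa_k=-a(\bx)\,\omega$, which is \eqref{eq:mcs-law}.

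The only delicate step is the sign and factor bookkeeping in the Leibniz identity, that is, showing that the two terms coincide rather than cancel. This is where skew-symmetry of $\bK$ and $\bL_k$ is essential. We will verify it componentwise as above. An explicit $\bx$-dependence of $\widetilde S$ causes no difficulty, since it enters only through the symmetric Hessian $\bm{A}$.
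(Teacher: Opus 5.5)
Your proposal is correct and follows the same argument as the paper. Both take the variational equation, wedge it on the left with $\dd\bz$, kill the symmetric Hessian term, convert the $\bK$ and $\bL_k$ terms into $\partial_t\omega$ and $\partial_{x_k}\kappa_k$ via skew-symmetry, and read off $-a\,\omega$ from the damping term; your componentwise check of the Leibniz factor of two just spells out a step the paper leaves implicit.
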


\begin{proof}
The variational equation of \eqref{eq:mcs} reads
$\bK\,\dd\bz_t + \sum_k\bL_k\,\dd\bz_{x_k}
= D^2\widetilde S(\bz)\,\dd\bz - \tfrac{a}{2}\bK\,\dd\bz$.  Taking the
wedge product with $\dd\bz$ on the left and using skew-symmetry of
$\bK$ and $\bL_k$,
\[
\dd\bz\wedge\bK\,\dd\bz_t = \partial_t\bigl(\tfrac12\,
\dd\bz\wedge\bK\,\dd\bz\bigr)=\partial_t\omega,
\qquad
\dd\bz\wedge\bL_k\,\dd\bz_{x_k} = \partial_{x_k}\kappa_k .
\]
Since $D^2\widetilde S$ is symmetric,
$\dd\bz\wedge D^2\widetilde S\,\dd\bz=0$, while
$\dd\bz\wedge\bigl(-\tfrac{a}{2}\bK\,\dd\bz\bigr) = -a\,\omega$.
Combining the three identities yields \eqref{eq:mcs-law}.
\end{proof}

The conservation law \eqref{eq:mcs-law} is local, meaning the
symplectic density $\omega$ is transported conservatively where $a=0$
and decays exponentially at rate $a(\bx)$ where $a>0$.  This locality
is precisely what is needed for the Brinkman method, whose dissipation
is confined to the solid region. In fact, global notions, such as symplecticity of the time-$t$ flow are too coarse to capture the localised nature of the penalisation.

\section{Multi-conformal symplectic structure of Brinkman-penalised
Hamiltonian PDEs}
\label{sec:structure}
Brinkman penalisation was initially introduced as a convenient computational trick to allow the solver avoid mesh generation. To date, existing literature has not yet investigated whether the penalised equation retains any of the geometric structure of the Hamiltonian PDE it approximates. This section shows that we can in fact say more about the retained structure of Brinkman-penalised methods, under one simple algebraic condition. This interpretation allows us to construct appropriate numerical integrators for this structure in Section 4, and the structure preserving neural operator architectures of Section 5. We first show why the naive way of trying to establish this structure fails, and further go on to derive a corrected formulation of Brinkman penalisation as multi-conformal symplectic structure.
\subsection{The penalised class and a naive attempt}
\label{subsec:class}

Both examples of \cref{subsec:brinkman} penalise a subset of
the evolution equations of a multi-symplectic system. In the wave
equation the damping is in the momentum equation, and Maxwell's equations damp the Amp\`ere equation.  We formalise this as follows.  Let $\bP=\bP^{\mathsf
T}=\bP^2\in\R^{n\times n}$ be an orthogonal projection selecting the
penalised equations, and consider the Brinkman-penalised
multi-symplectic PDE
\begin{equation}
\label{eq:penalised}
\bK\bz_t + \sum_{k=1}^{d}\bL_k\bz_{x_k}
= \nabla_{\bz}S(\bz) \;-\; \frac{\chixx}{\eta}\,\bP\bK\bz .
\end{equation}
The friction term subtracts $(\chi/\eta)$ times the corresponding
components of $\bK\bz$ from the selected rows as verified in
\cref{subsec:wave,subsec:maxwell} below. This reproduces exactly the
physical penalisations \eqref{eq:wave-brinkman} and
\eqref{eq:maxwell-brinkman}.

Comparing \eqref{eq:penalised} with \cref{def:mcs}, one is tempted to
match the two damping terms by keeping $S$ untouched and setting
$a=\chi/\eta$ in \eqref{eq:mcs}, i.e., replacing
$\bP\bK\bz$ by $\tfrac12\bK\bz$.  This naive substitution changes the
dynamics. For example after eliminating $v$ and $w$ in the wave equation with the data \eqref{eq:wave-ms-matrices}, the naive system
$\bK\bz_t+\bL\bz_x=\nabla S(\bz)-\frac{\chi}{2\eta}\bK\bz$ reduces to
\begin{equation}
\label{eq:naive-reduced}
u_{tt} - c^2 u_{xx}
= -\,\frac{\chixx}{\eta}\,u_t
\;-\;\frac{\chixx^{2}}{4\eta^{2}}\,u ,
\end{equation}
which is different to the penalised wave equation
\eqref{eq:wave-brinkman}. We see that the correct Brinkman friction appears, but it is accompanied by an additional stiff restoring force
$-\chi^2 u/(4\eta^2)$ supported on the solid. 
That is, we find that the
canonical conformal damping $-\frac{a}{2}\bK\bz$ acts on all
multi-symplectic variables, whereas Brinkman-type friction acts on a
subset. This leads to a discrepancy of a linear term which, if it is a gradient, can be absorbed into the Hamiltonian density.

\subsection{Main structural theorem}
\label{subsec:main}

\begin{theorem}[Multi-conformal symplectic structure of
Brinkman-penalised multi-symplectic PDEs]
\label{thm:general}
Let $\bK,\bL_1,\dots,\bL_d$ be skew-symmetric, let $S$ be smooth, let
$\chi$ be time-independent, and let $\bP=\bP^{\mathsf T}=\bP^2$ be an
orthogonal projection satisfying the \emph{compatibility condition}
\begin{equation}
\label{eq:compatibility}
\bK \;=\; \bP\bK + \bK\bP .
\end{equation}
Then the matrix $\bM:=(\bI-2\bP)\bK$ is symmetric, and the
Brinkman-penalised system \eqref{eq:penalised} is identical to the
multi-conformal symplectic system
\begin{equation}
\label{eq:penalised-canonical}
\bK\bz_t + \sum_{k=1}^{d}\bL_k\bz_{x_k}
= \nabla_{\bz}S_\chi(\bz) \;-\; \frac{\chixx}{2\eta}\,\bK\bz ,
\qquad
S_\chi(\bz)
\;=\;
S(\bz) \;+\; \frac{\chixx}{4\eta}\,\bz^{\mathsf T}\bM\,\bz .
\end{equation}
Consequently, by \cref{prop:mcs-law} with $a=\chi/\eta$, solutions of
\eqref{eq:penalised} satisfy the locally conformal multi-symplectic
conservation law
\begin{equation}
\label{eq:brinkman-law}
\partial_t\omega + \sum_{k=1}^{d}\partial_{x_k}\kappa_k
\;=\;
-\,\frac{\chixx}{\eta}\,\omega .
\end{equation}
\end{theorem}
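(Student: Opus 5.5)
The proof splits into three steps: show $\bM$ is symmetric, match the right-hand sides of \eqref{eq:penalised} and \eqref{eq:penalised-canonical}, and then apply \cref{prop:mcs-law}.

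\textbf{Symmetry of $\bM$.} Transpose $\bM=(\bI-2\bP)\bK$ using $\bK^{\mathsf T}=-\bK$ and $\bP^{\mathsf T}=\bP$:
\[
\bM^{\mathsf T}=\bK^{\mathsf T}(\bI-2\bP)^{\mathsf T}=-\bK+2\bK\bP .
\]
By the compatibility condition \eqref{eq:compatibility}, $\bK\bP=\bK-\bP\bK$. Substituting gives
\[
\bM^{\mathsf T}=-\bK+2\bK-2\bP\bK=\bK-2\bP\bK=\bM .
\]

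\textbf{Matching the two systems.} Both systems have the same left-hand side $\bK\bz_t+\sum_k\bL_k\bz_{x_k}$, so it suffices to check that their right-hand sides agree pointwise in $\bx$. Since $\chi$ depends only on $\bx$ and not on $\bz$, the $\bz$-gradient of the added quadratic term is straightforward. Because $\bM$ is symmetric,
\[
\nabla_{\bz}\Bigl(\tfrac{\chixx}{4\eta}\bz^{\mathsf T}\bM\bz\Bigr)=\tfrac{\chixx}{2\eta}\bM\bz .
\]
The right-hand side of \eqref{eq:penalised-canonical} is therefore
\[
\nabla S+\tfrac{\chi}{2\eta}\bigl((\bI-2\bP)\bK-\bK\bigr)\bz
=\nabla S-\tfrac{\chi}{\eta}\bP\bK\bz ,
\]
which is exactly the right-hand side of \eqref{eq:penalised}. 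The symmetry of $\bM$ is genuinely needed here: without it the gradient would be $\tfrac{\chi}{4\eta}(\bM+\bM^{\mathsf T})\bz$, and the cancellation would fail. This is where the compatibility condition is actually used, and it is the only step requiring care. The naive attempt in \cref{subsec:class} fails because no such symmetric correction exists there.

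\textbf{Conservation law.} The system \eqref{eq:penalised-canonical} has the form \eqref{eq:mcs} with $\widetilde S=S_\chi$, which is smooth in $\bz$ and depends on $\bx$ only through the prescribed coefficient $\chi$, and with $a=\chi/\eta\ge0$. Since $\chi$ is time-independent, $a$ is a legitimate prescribed coefficient. Applying \cref{prop:mcs-law} gives \eqref{eq:brinkman-law} with the unchanged densities $\omega$ and $\kappa_k$.

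One point needs checking in that proposition's proof: the Hessian $D^2S_\chi=D^2S+\tfrac{\chi}{2\eta}\bM$ must be symmetric. It is, since $\bM$ is symmetric, so the term $\dd\bz\wedge D^2S_\chi\,\dd\bz$ still vanishes.
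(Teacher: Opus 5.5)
Your proof is correct and matches the paper's argument. You transpose $\bM$ and use the compatibility condition to get symmetry, and you verify the identity $-\frac{\chi}{\eta}\bP\bK\bz=\frac{\chi}{2\eta}\bM\bz-\frac{\chi}{2\eta}\bK\bz$ starting from the canonical side, where the paper decomposes the friction term; it is the same computation. You then invoke \cref{prop:mcs-law}, noting as the paper does that the $\bx$-dependence of $S_\chi$ is harmless because only its symmetric $\bz$-Hessian enters.

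One aside is misleading: the naive attempt of \cref{subsec:class} fails because it omits the correction, not because none exists. For the wave equation the symmetric correction $\frac{\chi}{2\eta}uv$ is available.
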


\begin{proof}
To see that symmetry of $\bM$ is equivalent to \eqref{eq:compatibility} we use $\bK^{\mathsf T}=-\bK$ and $\bP^{\mathsf T}=\bP$ and compute,
\[
\bM^{\mathsf T} = \bK^{\mathsf T}(\bI-2\bP) = -\bK+2\bK\bP,
\qquad
\bM = \bK - 2\bP\bK ,
\]
so $\bM=\bM^{\mathsf T}$ if and only if $2\bK = 2\bP\bK+2\bK\bP$.
Given \eqref{eq:compatibility}, decompose the friction term as
\[
-\frac{\chi}{\eta}\,\bP\bK\bz
= \frac{\chi}{2\eta}\,(\bK-2\bP\bK)\bz
  \;-\;\frac{\chi}{2\eta}\,\bK\bz
= \frac{\chi}{2\eta}\,\bM\bz \;-\;\frac{\chi}{2\eta}\,\bK\bz .
\]
Since $\bM$ is symmetric,
$\frac{\chi}{2\eta}\bM\bz
= \nabla_{\bz}\bigl(\frac{\chi}{4\eta}\bz^{\mathsf T}\bM\bz\bigr)$,
and absorbing this gradient into the density yields
\eqref{eq:penalised-canonical}.  The conservation law
\eqref{eq:brinkman-law} then follows from \cref{prop:mcs-law}. Note
that the $\bx$-dependence of $S_\chi$ through $\chi$ is inconsequential
because only the $\bz$-Hessian of the density enters the proof.
\end{proof}

In addition to the above Theorem, we now show that the compatibility condition admits an intuitive structural interpretation.

\begin{lemma}
\label{lem:interpretation}
For an orthogonal projection $\bP$ and skew-symmetric $\bK$, condition
\eqref{eq:compatibility} holds if and only if
\begin{equation}
\label{eq:block-offdiag}
\bP\bK\bP = 0
\qquad\text{and}\qquad
(\bI-\bP)\,\bK\,(\bI-\bP) = 0 ,
\end{equation}
that is, if and only if $\bK$ is off-diagonal with respect to the
splitting $\R^n=\ran\bP\oplus\ker\bP$.  In this case
$\bP\bK=\bK(\bI-\bP)$ and $\bK\bP=(\bI-\bP)\bK$.
\end{lemma}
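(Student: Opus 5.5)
The plan is to write $\bQ:=\bI-\bP$, so that $\bP+\bQ=\bI$, $\bP\bQ=\bQ\bP=0$, and $\bK$ splits into four blocks,
\[
\bK=\bP\bK\bP+\bP\bK\bQ+\bQ\bK\bP+\bQ\bK\bQ .
\]
Both directions of the equivalence, and the two closing identities, should then follow by multiplying \eqref{eq:compatibility} on the left and right by $\bP$ or $\bQ$. Skew-symmetry of $\bK$ is not needed for the equivalence itself; only $\bP^2=\bP$ is used.

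For the forward direction, assume $\bK=\bP\bK+\bK\bP$. Multiplying on both sides by $\bP$ gives $\bP\bK\bP=\bP\bK\bP+\bP\bK\bP$, hence $\bP\bK\bP=0$. Multiplying on both sides by $\bQ$ makes the right-hand side $\bQ\bP\bK\bQ+\bQ\bK\bP\bQ=0$, so $\bQ\bK\bQ=0$.

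For the converse, assume \eqref{eq:block-offdiag}. The block decomposition reduces to $\bK=\bP\bK\bQ+\bQ\bK\bP$. We then compute
\[
\bP\bK=\bP\bK(\bP+\bQ)=\bP\bK\bQ,
\qquad
\bK\bP=(\bP+\bQ)\bK\bP=\bQ\bK\bP .
\]
Adding these gives $\bP\bK+\bK\bP=\bP\bK\bQ+\bQ\bK\bP=\bK$, which is \eqref{eq:compatibility}.

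The same two computations give the closing identities: $\bP\bK=\bP\bK\bQ$, and also $\bK\bQ=(\bP+\bQ)\bK\bQ=\bP\bK\bQ$, so $\bP\bK=\bK(\bI-\bP)$. Symmetrically, $\bK\bP=\bQ\bK\bP=(\bI-\bP)\bK$. An alternative route is to rearrange \eqref{eq:compatibility} directly as $\bP\bK=\bK-\bK\bP=\bK(\bI-\bP)$, and likewise for the other identity.

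The phrase ``off-diagonal with respect to $\ran\bP\oplus\ker\bP$'' is just a restatement of the vanishing diagonal blocks: $\bP$ and $\bQ$ are the complementary orthogonal projections onto $\ran\bP$ and $\ker\bP$. There is no real obstacle here. The only point needing care is to use $\bQ\bP=0$ correctly when killing the cross terms.
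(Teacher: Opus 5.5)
Your proof is correct and follows the paper's argument essentially step for step. You sandwich \eqref{eq:compatibility} between $\bP$'s and between $(\bI-\bP)$'s for the forward direction, expand $\bK$ into its four blocks for the converse, and read off $\bP\bK=\bP\bK(\bI-\bP)=\bK(\bI-\bP)$ (and its mirror) for the closing identities; your side remarks are also correct, namely that only $\bP^2=\bP$ is used and that the closing identities follow by directly rearranging \eqref{eq:compatibility}.
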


\begin{proof}
If \eqref{eq:compatibility} holds, multiplying by $\bP$ on both sides
gives $\bP\bK\bP = \bP(\bP\bK+\bK\bP)\bP = 2\bP\bK\bP$, hence
$\bP\bK\bP=0$; similarly $(\bI-\bP)\bK(\bI-\bP)=0$ follows by
multiplying with $\bI-\bP$.  Conversely, expanding
$\bK=(\bP+(\bI-\bP))\bK(\bP+(\bI-\bP))$ and deleting the two vanishing
blocks yields $\bK=\bP\bK(\bI-\bP)+(\bI-\bP)\bK\bP=\bP\bK+\bK\bP$
(using \eqref{eq:block-offdiag} again).  The final identities follow
from $\bP\bK=\bP\bK(\bI-\bP)=\bK(\bI-\bP)$ under
\eqref{eq:block-offdiag}.
\end{proof}

This says the penalised equations must be coupled by the
symplectic structure only to unpenalised ones, or in other words each damped variable is $\bK$-conjugate to an undamped variable.  Damping momenta while
leaving positions free, or damping electric fields while leaving
magnetic fields free, is of exactly this type, but damping a canonically
conjugate pair jointly is not (see e.g. Remark \ref{rem:nls}).  For the canonical
structure $\bK=\bJ$ with $\bz=(q,p)$ and $\bP$ the projection onto the
momentum equations, \eqref{eq:compatibility} is verified immediately.

\begin{remark}
\label{rem:nls}
If the penalisation damps complete $\bK$-conjugate pairs (as is the case for the linearly damped nonlinear Schr\"odinger equation, where the friction $-\gamma\psi$ acts on both the real and imaginary parts of $\psi$) then the friction term is proportional to $\bK\bz$ itself and
the system is already in the canonical form \eqref{eq:mcs} with
unmodified $S$. This class is studied in
\cite{moore2013conformal}.  \Cref{thm:general} addresses the
complementary case of partial damping, which is the typical
situation for Brinkman-type penalisations, and shows that the price of
partial damping is exactly the explicit quadratic correction
$\frac{\chi}{4\eta}\bz^{\mathsf T}\bM\bz$ of the Hamiltonian density,
supported on the solid region.
\end{remark}

\subsection{Example I: the wave equation with Brinkman friction}
\label{subsec:wave}

Take the multi-symplectic data \eqref{eq:wave-ms-matrices} of
Example \ref{ex:wave-ms} and let $\bP=\diag(1,0,0)$ select the first row,
i.e., the momentum equation $-v_t+c\,w_x=\dots$, which is where the
Brinkman friction acts.  Then
\[
\bP\bK+\bK\bP
= \begin{pmatrix}0&-1&0\\0&0&0\\0&0&0\end{pmatrix}
+ \begin{pmatrix}0&0&0\\1&0&0\\0&0&0\end{pmatrix}
= \bK ,
\qquad
\bM=(\bI-2\bP)\bK
= \begin{pmatrix}0&1&0\\1&0&0\\0&0&0\end{pmatrix},
\]
so \cref{thm:general} applies with
$\tfrac14\bz^{\mathsf T}\bM\bz=\tfrac12\,uv$.

\begin{corollary}[Wave equation]
\label{cor:wave}
With $v=u_t$, $w=c\,u_x$, the Brinkman-penalised wave equation
\eqref{eq:wave-brinkman} (in one space dimension; the multidimensional
case is identical) is the multi-conformal symplectic system
\eqref{eq:penalised-canonical} with
\begin{equation}
\label{eq:wave-Schi}
S_\chi(\bz)
= \tfrac12\bigl(v^2-w^2\bigr)
+ \frac{\chixx}{2\eta}\,u\,v ,
\end{equation}
and its solutions satisfy
$\partial_t\omega+\partial_x\kappa=-\frac{\chi}{\eta}\omega$ with
$\omega=\dd u\wedge\dd v$ and $\kappa=c\,\dd u\wedge\dd w$.
\end{corollary}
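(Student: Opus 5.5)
The corollary follows by specialising \cref{thm:general} to the data \eqref{eq:wave-ms-matrices} with $\bP=\diag(1,0,0)$. Two things must be checked: that the abstract penalised system \eqref{eq:penalised} with these data is exactly the physical equation \eqref{eq:wave-brinkman}, and that the modified density and conservation law in \eqref{eq:penalised-canonical}--\eqref{eq:brinkman-law} reduce to \eqref{eq:wave-Schi} and to $\omega=\dd u\wedge\dd v$, $\kappa=c\,\dd u\wedge\dd w$.

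First, I would write out the rows of \eqref{eq:penalised}. Since $\bK\bz=(-v,u,0)^{\mathsf T}$, the friction term is $-\frac{\chi}{\eta}\bP\bK\bz=(\frac{\chi}{\eta}v,0,0)^{\mathsf T}$. The three rows then read
\[
-v_t+c\,w_x=v+\tfrac{\chi}{\eta}v,\qquad u_t=v,\qquad -c\,u_x=-w .
\]
Here the first row uses $\nabla S=(0,v,-w)^{\mathsf T}$, so its right-hand side is simply the first component of the friction term. This is where I have to be careful with signs and with which component of $\nabla S$ lands in which row. Writing $\partial_1 S=0$, the first row is $-v_t+c\,w_x=\frac{\chi}{\eta}v$. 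Substituting $v=u_t$ and $w=c\,u_x$ gives $u_{tt}=c^2u_{xx}-\frac{\chi}{\eta}u_t$, which is \eqref{eq:wave-brinkman}. So the penalised multi-symplectic system and the physical Brinkman equation coincide.

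Second, the compatibility condition $\bP\bK+\bK\bP=\bK$ and the form of $\bM$ have already been computed in the text preceding the corollary. Hence \cref{thm:general} applies directly. It gives $S_\chi=S+\frac{\chi}{4\eta}\bz^{\mathsf T}\bM\bz$, and since $\bz^{\mathsf T}\bM\bz=2uv$ this equals $\tfrac12(v^2-w^2)+\frac{\chi}{2\eta}uv$, which is \eqref{eq:wave-Schi}. As a consistency check, I would verify that the canonical form also reproduces the first row. With $\nabla S_\chi=(\frac{\chi}{2\eta}v,\,v+\frac{\chi}{2\eta}u,\,-w)$ and $-\frac{\chi}{2\eta}\bK\bz=(\frac{\chi}{2\eta}v,-\frac{\chi}{2\eta}u,0)$, the sum is $(\frac{\chi}{\eta}v,\,v,\,-w)$. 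This matches the rows above, and in particular the spurious $u$-term of \eqref{eq:naive-reduced} cancels.

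Finally, the conservation law follows from \eqref{eq:brinkman-law}, with $\omega,\kappa$ computed from \eqref{eq:ms-law}. We have $\tfrac12\dd\bz\wedge\bK\dd\bz=\tfrac12(-\dd u\wedge\dd v+\dd v\wedge\dd u)=\dd v\wedge\dd u$, and similarly for $\kappa$. This shows that the sign convention must be reconciled with the stated $\omega=\dd u\wedge\dd v$. Because the law \eqref{eq:brinkman-law} is linear and homogeneous in $(\omega,\kappa)$, flipping the sign of both densities together leaves it valid, so the stated densities (as in Example \ref{ex:wave-ms}) work. This sign bookkeeping is the only real obstacle. For the multidimensional case the same argument applies with $w_k=c\,u_{x_k}$, one $\bL_k$ per direction, and $\bP$ still selecting only the momentum row, so $\bM$ is unchanged.
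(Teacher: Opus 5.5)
Your identification of the rows of \eqref{eq:penalised} with \eqref{eq:wave-brinkman} is exactly the paper's proof. Your evaluation $\bz^{\mathsf T}\bM\bz=2uv$ and your consistency check of the canonical form are also correct. (The first displayed row, with right-hand side $v+\frac{\chi}{\eta}v$, is a slip you fix in the next sentence.)

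The step that fails is the final sign reconciliation. You correctly find $\omega=\tfrac12\,\dd\bz\wedge\bK\,\dd\bz=\dd v\wedge\dd u$, but $\kappa$ does not behave ``similarly''. Since $L_{13}=c$ and $L_{31}=-c$, one gets $\kappa=\tfrac12\,\dd\bz\wedge\bL\,\dd\bz=c\,\dd u\wedge\dd w$, which already has the stated sign. So the stated pair $(\dd u\wedge\dd v,\ c\,\dd u\wedge\dd w)$ arises from the derived pair by flipping $\omega$ alone, and that is not a symmetry of the law. Applied correctly, your homogeneity argument produces $(\dd u\wedge\dd v,\ -c\,\dd u\wedge\dd w)$ instead. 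No choice of wedge-product convention rescues this, because any such convention multiplies $\omega$ and $\kappa$ by the same sign.

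A direct check confirms it. The variational equations $\dd u_t=\dd v$, $\dd w=c\,\dd u_x$, $\dd v_t=c\,\dd w_x-\frac{\chi}{\eta}\dd v$ give
\[
\partial_t(\dd u\wedge\dd v)-\partial_x\bigl(c\,\dd u\wedge\dd w\bigr)=-\frac{\chixx}{\eta}\,\dd u\wedge\dd v .
\]
So with $\omega=\dd u\wedge\dd v$ the law holds only for $\kappa=-c\,\dd u\wedge\dd w$ (equivalently $\omega=\dd v\wedge\dd u$, $\kappa=c\,\dd u\wedge\dd w$). The explicit densities in the statement therefore contain a relative sign slip, inherited from \cref{ex:wave-ms}. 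The paper's proof does not catch it because it only performs the row identification and takes the densities from that example. You were right to flag the sign issue, but the correct resolution is to change the sign of $\kappa$, not to assert that the stated densities satisfy the law. The structural content of \cref{thm:general} is unaffected.
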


\begin{proof}
Only the identification of \eqref{eq:penalised} with
\eqref{eq:wave-brinkman} remains to be checked.  Since
$\bP\bK\bz=(-v,0,0)^{\mathsf T}$, the rows of \eqref{eq:penalised}
read $-v_t+c\,w_x=(\chi/\eta)v$, $u_t=v$, $w=c\,u_x$; eliminating $v$
and $w$ gives $u_{tt}=c^2u_{xx}-(\chi/\eta)u_t$.
\end{proof}

\begin{remark}
\label{rem:shift}
The quadratic correction in \eqref{eq:wave-Schi} is a solid-supported
coupling between position and velocity.  It can be traded for a
solid-supported potential by the linear change of variables
$\tilde v = v + \frac{\chi}{2\eta}u$ (this is the PDE analogue of the
classical transformation that converts a damped oscillator into
conformal Hamiltonian form). The shear
$T=\bigl(\begin{smallmatrix}1&0&0\\ \chi/2\eta&1&0\\
0&0&1\end{smallmatrix}\bigr)$ satisfies $T^{\mathsf T}\bK T=\bK$ and
$T^{\mathsf T}\bL T=\bL$, i.e., it preserves the multi-symplectic
structure, and transforms \eqref{eq:wave-Schi} into
\begin{equation}
\label{eq:wave-Schi-shift}
\widetilde S_\chi(\tilde\bz)
= \tfrac12\bigl(\tilde v^2 - w^2\bigr)
- \frac{\chixx^{2}}{8\eta^{2}}\,u^{2},
\qquad
\tilde v = u_t + \frac{\chixx}{2\eta}\,u .
\end{equation}
The correction term may then be interpreted as the potential energy
stored by the penalisation medium.  Both interpretations carry the same
conformal law \eqref{eq:brinkman-law}. We work with
\eqref{eq:wave-Schi} because it requires no change of variables and is
the direct output of \cref{thm:general}.
\end{remark}

\subsection{Example II: Maxwell's equations with conductivity
penalisation}
\label{subsec:maxwell}

Take the multi-symplectic data \eqref{eq:maxwell-ms-matrices} of
Example \ref{ex:maxwell-ms} and let $\bP=\diag(0,\bI_3)$ select the second
block row, i.e., the Amp\`ere equations
$-\bE_t+\curl\bH=\dots$ where the conduction current acts.
Then
\[
\bP\bK+\bK\bP
= \begin{pmatrix}0&0\\-\bI_3&0\end{pmatrix}
+ \begin{pmatrix}0&\bI_3\\0&0\end{pmatrix}
= \bK ,
\qquad
\bM=(\bI-2\bP)\bK
= \begin{pmatrix}0&\bI_3\\ \bI_3&0\end{pmatrix},
\]
so \cref{thm:general} applies with
$\tfrac14\bz^{\mathsf T}\bM\bz=\tfrac12\,\bE\cdot\bH$.

\begin{corollary}[Maxwell's equations]
\label{cor:maxwell}
The conductivity-penalised Maxwell system
\eqref{eq:maxwell-brinkman} is the multi-conformal symplectic system
\eqref{eq:penalised-canonical} with
\begin{equation}
\label{eq:maxwell-Schi}
S_\chi(\bz) \;=\; \frac{\chixx}{2\eta}\,\bE\cdot\bH ,
\end{equation}
and its solutions satisfy
\begin{equation}
\label{eq:maxwell-law}
\partial_t\omega+\sum_{k=1}^{3}\partial_{x_k}\kappa_k
= -\frac{\chixx}{\eta}\,\omega,
\qquad
\omega=\sum_{i=1}^{3}\dd E_i\wedge\dd H_i,
\quad
\kappa_k=\tfrac12\,\dd\bz\wedge\bL_k\,\dd\bz .
\end{equation}
\end{corollary}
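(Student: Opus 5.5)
The plan is to reduce everything to \cref{thm:general}. The compatibility condition \eqref{eq:compatibility} and the form of $\bM$ have already been checked in the display preceding the corollary. So two things remain: identify the abstract penalised system \eqref{eq:penalised} with the physical system \eqref{eq:maxwell-brinkman}, and read off $S_\chi$ and $\omega$.

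\textbf{Step 1: identify the friction term.} With $\bz=(\bE,\bH)^{\mathsf T}$ and $\bP=\diag(0,\bI_3)$, we have $\bK\bz=(\bH,-\bE)^{\mathsf T}$, so $\bP\bK\bz=(0,-\bE)^{\mathsf T}$. Since $S\equiv0$, the first block row of \eqref{eq:penalised} is $\bH_t+\curl\bE=0$, which is Faraday's law. The second block row is $-\bE_t+\curl\bH=(\chi/\eta)\bE$, i.e.\ $\bE_t=\curl\bH-(\chi/\eta)\bE$. Here I use the curl identity $\sum_k\bm{C}_k\partial_{x_k}\bm{F}=\curl\bm{F}$ from Example~\ref{ex:maxwell-ms}. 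Together these rows are exactly \eqref{eq:maxwell-brinkman}.

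\textbf{Step 2: read off the density and the conservation law.} \Cref{thm:general} gives
\[
S_\chi=0+\tfrac{\chi}{4\eta}\bz^{\mathsf T}\bM\bz,
\qquad
\bz^{\mathsf T}\bM\bz=\bE\cdot\bH+\bH\cdot\bE=2\,\bE\cdot\bH,
\]
so $S_\chi=\frac{\chi}{2\eta}\bE\cdot\bH$, which is \eqref{eq:maxwell-Schi}. The conservation law \eqref{eq:maxwell-law} is then \eqref{eq:brinkman-law}. For the explicit symplectic density, $\omega=\tfrac12\,\dd\bz\wedge\bK\,\dd\bz=\tfrac12(\dd\bE\wedge\dd\bH-\dd\bH\wedge\dd\bE)=\sum_i\dd E_i\wedge\dd H_i$, using antisymmetry of the wedge product. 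The fluxes $\kappa_k$ are left in their general form.

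\textbf{Obstacle.} There is no real difficulty here; the only care needed is bookkeeping. One must check that $\bP$ selects the Amp\`ere rows, the second block row of $\bK\bz_t$, which is $-\bE_t$, so the sign of the friction term comes out right. One should also note that $\chi$ is time-independent, as \cref{thm:general} requires.
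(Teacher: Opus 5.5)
Your proposal is correct and matches the paper's proof. Like the paper, you reduce everything to \cref{thm:general} by noting that $\bP\bK\bz=(0,-\bE)^{\mathsf T}$, so the block rows of \eqref{eq:penalised} become $\bH_t+\curl\bE=0$ and $-\bE_t+\curl\bH=(\chi/\eta)\bE$, and you read $S_\chi$ off from $\bM$. The paper also gives a direct check that $\nabla_{\bz}S_\chi-\frac{\chi}{2\eta}\bK\bz=(0,(\chi/\eta)\bE)^{\mathsf T}$, which is a useful sanity check but not needed once the theorem applies.
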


\begin{proof}
Since $\bP\bK\bz=(0,-\bE)^{\mathsf T}$, the block rows of
\eqref{eq:penalised} read $\bH_t+\curl\bE=0$ and
$-\bE_t+\curl\bH=(\chi/\eta)\bE$, i.e., exactly
\eqref{eq:maxwell-brinkman}. Alternatively one can verify directly that
$\nabla_{\bz}S_\chi=(\frac{\chi}{2\eta}\bH,\frac{\chi}{2\eta}\bE)$ and
$-\frac{\chi}{2\eta}\bK\bz=(-\frac{\chi}{2\eta}\bH,
\frac{\chi}{2\eta}\bE)$ sum to $(0,(\chi/\eta)\bE)$.
\end{proof}

It is noteworthy that for Maxwell's equations no change of
variables is required at all. The entire modification is the
solid-supported bilinear density $\frac{\chi}{2\eta}\bE\cdot\bH$.  The
two examples demonstrate that the
general correction $\frac{\chi}{4\eta}\bz^{\mathsf T}\bM\bz$ may
manifest as a position--velocity coupling (wave), as a field--field
coupling (Maxwell), or, after a structure-preserving shear, as a pure
potential (\cref{rem:shift}).

\begin{remark}
\label{rem:tm}
In two dimensions the transverse magnetic (TM) mode of
\eqref{eq:maxwell-brinkman}, with fields $(E_z,H_x,H_y)$ independent
of $z$, reduces to the two-dimensional penalised wave equation
\eqref{eq:wave-brinkman} for $u=E_z$.  The wave-equation experiments
of \cref{sec:experiments} therefore double as experiments on
two-dimensional electromagnetic scattering by conducting obstacles.
\end{remark}

\subsection{Physical meaning of the conservation law}
\label{subsec:physical}

The identity \eqref{eq:brinkman-law} decomposes the dynamics of the
symplectic density according to the geometry:
\begin{itemize}[leftmargin=2em]
\item \textbf{Fluid domain} ($\chi=0$):
$\partial_t\omega+\sum_k\partial_{x_k}\kappa_k=0$; the system is
conservative and waves propagate without loss, much like an
unpenalised Hamiltonian PDE.
\item \textbf{Solid domain} ($\chi=1$): the symplectic density decays
as $\omega\sim e^{-t/\eta}$ for some
fast time scale $\eta$. This is the geometric mechanism by which the
penalisation expels the fields from $\Omega_s$.
\end{itemize}
The same dichotomy holds for the energy.  For the wave equation,
multiplying \eqref{eq:wave-brinkman} by $u_t$ yields the local energy
identity
\begin{equation}
\label{eq:energy-identity}
\partial_t E + \nabla\!\cdot\!F = -\,\frac{\chixx}{\eta}\,u_t^2,
\qquad
E=\tfrac12\bigl(u_t^2 + c^2|\nabla u|^2\bigr),
\quad
F = -\,c^2\,u_t\,\nabla u ,
\end{equation}
and for Maxwell's equations Poynting's theorem gives
\begin{equation}
\label{eq:poynting}
\partial_t\Bigl(\tfrac12|\bE|^2+\tfrac12|\bH|^2\Bigr)
+ \nabla\!\cdot\!\bigl(\bE\times\bH\bigr)
= -\,\frac{\chixx}{\eta}\,|\bE|^2 ,
\end{equation}
where the right-hand side is the Ohmic dissipation
$-\bm{J}\cdot\bE$ of the artificial conductor.  In both cases the
energy is conserved locally in $\Omega_f$ and dissipated only in
$\Omega_s$.  In the following \cref{sec:integrators,sec:operators} we propose numerical methods and learned surrogates that reproduce
this behaviour by construction. 
\section{Structure-preserving integrators for the penalised class}
\label{sec:integrators}

\subsection{Strang splitting into exact dissipation and
multi-symplectic transport}
\label{subsec:splitting}

Conformal symplectic integrators for linearly damped systems are
classically obtained by splitting the vector field into its
Hamiltonian and dissipative parts and composing the exact dissipative
flow with a symplectic integrator
\cite{mclachlan2001conformal,mclachlan2002splitting,bhatt2016conformal}.
The structure identified in \cref{thm:general} makes the same strategy
available for the entire Brinkman-penalised class \eqref{eq:penalised}.  First we split
\eqref{eq:penalised} as
\begin{equation}
\label{eq:splitting}
\underbrace{\;\bK\bz_t + \sum_{k}\bL_k\bz_{x_k} = \nabla S(\bz)\;}
_{\text{(C): conservative multi-symplectic PDE}}
\qquad\qquad
\underbrace{\;\bK\bz_t = -\,\frac{\chixx}{\eta}\,\bP\bK\bz\;}
_{\text{(D): dissipative part}} .
\end{equation}
By \cref{lem:interpretation}, $\bP\bK=\bK(\bI-\bP)$, so subsystem (D)
is solved by damping the unprojected components using exact damping of the form
\begin{equation}
\label{eq:dissipation-semigroup}
\Do^{\chi,\eta}_{\tau}
\;=\;
\bP' \;+\; e^{-\chixx\,\tau/\eta}\,\bigl(\bI-\bP'\bigr),
\qquad
\bP' := \bP \ \text{on } \ran\bK,
\quad \bI \ \text{on } \ker\bK.
\end{equation}
Here the fields $(\bI-\bP)\bz$ lying in $\ran\bK$ are multiplied by
by $e^{-\chi\tau/\eta}$ pointwise, and the remaining components are kept fixed.
For the wave equation \eqref{eq:wave-ms-matrices} this is the velocity
damping $v\mapsto e^{-\chi\tau/\eta}v$ with $u$ and $w$ frozen; for
Maxwell's equations \eqref{eq:maxwell-ms-matrices} it is the field
damping $\bE\mapsto e^{-\chi\tau/\eta}\bE$ with $\bH$ frozen.
Subsystem (C) is the unpenalised Hamiltonian PDE, for which any
multi-symplectic method may be used. In our experiments we choose the
leapfrog/Euler-box discretisation
\cite{bridges2001multi,moore2003backward}, whose one-step map we
denote $\Phi^{\mathrm{MS}}_{\Delta t}$.  The proposed scheme is the
Strang composition
\begin{equation}
\label{eq:scheme}
\Phi_{\Delta t}
\;=\;
\Do^{\chi,\eta}_{\Delta t/2}
\;\circ\;
\Phi^{\mathrm{MS}}_{\Delta t}
\;\circ\;
\Do^{\chi,\eta}_{\Delta t/2}.
\end{equation}
Being a Strang splitting of two flows, each realised exactly or by a
second-order symmetric method, $\Phi_{\Delta t}$ is second-order
accurate \cite{strang1968construction,mclachlan2002splitting}. Since
the stiff factor $\chi/\eta$ is confined to the exactly integrated
substep, the splitting suffers no stability restriction from the
penalty parameter, and the CFL condition of the conservative substep
is the only step-size constraint.

Under the compatibility condition, the dissipative substep carries
exactly the conformal contraction of the symplectic density,
while the conservative substep carries the flux. This is made precise in the following lemma.

\begin{lemma}
\label{lem:dissipative-contraction}
Assume \eqref{eq:compatibility} and let
$T_a=\bP+e^{-a}(\bI-\bP)$, $a\ge0$.  Then
\begin{equation}
\label{eq:TKT}
T_a^{\mathsf T}\,\bK\,T_a \;=\; e^{-a}\,\bK ,
\end{equation}
so the tangent map of $\Do^{\chi,\eta}_{\tau}$ scales the symplectic
density pointwise by the exact factor:
$\omega\mapsto e^{-\chixx\,\tau/\eta}\,\omega$.  (Components in
$\ker\bK$ do not contribute to $\omega$, so the modification of $T_a$
on $\ker\bK$ in \eqref{eq:dissipation-semigroup} is inconsequential.)
\end{lemma}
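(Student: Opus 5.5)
The plan is to expand $T_a^{\mathsf T}\bK T_a$ blockwise with respect to the splitting $\R^n=\ran\bP\oplus\ker\bP$ and then apply \cref{lem:interpretation}. Since $\bP$ is symmetric, $T_a$ is symmetric, so $T_a^{\mathsf T}=T_a=\bP+e^{-a}(\bI-\bP)$. Writing $\bQ=\bI-\bP$, the product becomes
\[
T_a^{\mathsf T}\bK T_a
=\bP\bK\bP + e^{-a}\bigl(\bP\bK\bQ+\bQ\bK\bP\bigr) + e^{-2a}\,\bQ\bK\bQ .
\]
By \cref{lem:interpretation}, the compatibility condition \eqref{eq:compatibility} is equivalent to $\bP\bK\bP=0$ and $\bQ\bK\bQ=0$. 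The first and last terms therefore vanish. Expanding $\bK=(\bP+\bQ)\bK(\bP+\bQ)$ and dropping the two vanishing blocks gives $\bK=\bP\bK\bQ+\bQ\bK\bP$. The middle term is then exactly $e^{-a}\bK$, which proves \eqref{eq:TKT}.

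Next comes the pointwise statement about $\omega$. The map $\Do^{\chi,\eta}_\tau$ is linear and acts pointwise in $\bx$, so its tangent map at every point is the matrix itself. The variation transforms as $\dd\bz\mapsto \Do\,\dd\bz$, and
\[
\omega\mapsto \tfrac12\,\Do\,\dd\bz\wedge\bK\,\Do\,\dd\bz=\tfrac12\,\dd\bz\wedge \Do^{\mathsf T}\bK\Do\,\dd\bz .
\]
It therefore suffices to show $\Do^{\mathsf T}\bK\Do=e^{-a}\bK$ with $a=\chixx\tau/\eta\ge0$. If $\Do$ coincided with $T_a$, this would follow from \eqref{eq:TKT}.

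The only genuine obstacle is the modification of $\bP'$ on $\ker\bK$ in \eqref{eq:dissipation-semigroup}. The plan is to note that $\bK$ is skew-symmetric, so $\R^n=\ran\bK\oplus\ker\bK$ orthogonally. Any variation then splits as $\dd\bz=\dd\bz_r+\dd\bz_k$, and since $\bK\,\dd\bz_k=0$ and $\dd\bz_k^{\mathsf T}\bK=0$, the $\ker\bK$ part drops out of $\omega$ entirely. On $\ran\bK$ the map $\Do$ agrees with $T_a$ by definition. Its value on $\ker\bK$ is either identity or $T_a$, and in both cases it lands in a component that $\bK$ annihilates, so it cannot change $\omega$.

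For this to work, $\Do$ restricted to $\ran\bK$ must map into something whose $\ker\bK$ component is invisible to $\bK$. I will check this using $\bK\Do\,\dd\bz=\bK T_a\,\dd\bz_r$ and $\Do^{\mathsf T}\bK=T_a\bK$ when the output is tested against $\ran\bK$. In the paper's two examples, $\bP$ commutes with the orthogonal projector onto $\ker\bK$: for the wave equation $\ker\bK=\operatorname{span}(e_3)$, and for Maxwell $\ker\bK=0$. In that case $\Do$ and $T_a$ differ only on $\ker\bK$ and the identity is immediate. In general I will adopt that interpretation of $\bP'$, since it is the one under which \eqref{eq:dissipation-semigroup} is well defined.
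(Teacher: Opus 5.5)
Your argument is correct and is essentially the paper's proof: the same four-term expansion of $T_a^{\mathsf T}\bK T_a$, with the diagonal blocks $\bP\bK\bP$ and $(\bI-\bP)\bK(\bI-\bP)$ vanishing and the off-diagonal bracket identified as $\bK$ via \cref{lem:interpretation}, followed by the same pullback $\tfrac12\,\dd\bz\wedge T_a^{\mathsf T}\bK T_a\,\dd\bz=e^{-a}\omega$. Your treatment of $\ker\bK$ is more careful than the paper's one-line parenthetical, and the commutation of $\bP$ with the orthogonal projector onto $\ker\bK$ that you \emph{adopt} as an interpretation is in fact automatic: since $\bK\bP=(\bI-\bP)\bK$, any $k\in\ker\bK$ satisfies $\bK\bP k=0$, so the symmetric $\bP$ preserves $\ker\bK$ and hence commutes with that projector, with no interpretive choice needed.
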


\begin{proof}
Expanding and using \cref{lem:interpretation},
\[
T_a^{\mathsf T}\bK T_a
= \bP\bK\bP + e^{-a}\bigl(\bP\bK(\bI-\bP)+(\bI-\bP)\bK\bP\bigr)
+ e^{-2a}(\bI-\bP)\bK(\bI-\bP)
= e^{-a}\,\bK ,
\]
since the first and last terms vanish by \eqref{eq:block-offdiag} and
the middle bracket equals $\bK$.  Then
$\tfrac12(T_a\dd\bz)\wedge\bK(T_a\dd\bz)
=\tfrac12\,\dd\bz\wedge T_a^{\mathsf T}\bK T_a\,\dd\bz
=e^{-a}\omega$.
\end{proof}

\subsection{Discrete conformal multi-symplectic conservation law}
\label{subsec:discrete-law}

Let $\omega^n_i$ denote the discrete symplectic density associated
with the discrete variational equation of the composed scheme at the
grid point $(t_n,\bx_i)$, and suppose that
$\Phi^{\mathrm{MS}}_{\Delta t}$ satisfies the discrete
multi-symplectic conservation law
\begin{equation}
\label{eq:discrete-ms}
\frac{\tilde\omega^{\,n+1}_i - \tilde\omega^{\,n}_i}{\Delta t}
+ \sum_{k=1}^{d}
\bigl(\delta_k\,\tilde\kappa^{\,n+1/2}_k\bigr)_i = 0
\end{equation}
for the intermediate variables $\tilde{\bz}$ to which it is applied.
Here $\delta_k$ denotes the scheme's spatial difference operator in
direction $k$ (this holds for the Preissman and Euler box schemes
\cite{bridges2001multi}).

\begin{proposition}
\label{prop:discrete-law}
Assume \eqref{eq:compatibility} and that $\chi$ is constant on the
stencil of grid point $i$ and write $\alpha_i=\chi(\bx_i)/\eta$.  Then
the scheme \eqref{eq:scheme} satisfies
\begin{equation}
\label{eq:discrete-conformal}
\frac{\omega^{\,n+1}_i - e^{-\alpha_i\Delta t}\,\omega^{\,n}_i}
     {\Delta t}
\;+\;
e^{-\alpha_i\Delta t/2}\,
\sum_{k=1}^{d}\bigl(\delta_k\,\tilde\kappa^{\,n+1/2}_k\bigr)_i
\;=\;0 ,
\end{equation}
where $\tilde\kappa_k$ are the discrete fluxes of the interior
multi-symplectic scheme evaluated at the intermediate stage.  This is
the exact discrete analogue of \eqref{eq:brinkman-law}. In the fluid
region ($\alpha_i=0$) the scheme is multi-symplectic, and in the
interior of the solid region the discrete symplectic density contracts
by the exact factor $e^{-\Delta t/\eta}$ per step, up to the discrete
flux.
\end{proposition}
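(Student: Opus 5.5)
We track the discrete symplectic density through the three substeps of the Strang composition \eqref{eq:scheme}. Write the step as
\[
\bz^n \xrightarrow{\;\Do_{\Delta t/2}\;} \tilde\bz^{\,n}
\xrightarrow{\;\Phi^{\mathrm{MS}}_{\Delta t}\;} \tilde\bz^{\,n+1}
\xrightarrow{\;\Do_{\Delta t/2}\;} \bz^{n+1},
\]
with the same splitting applied to the discrete variational equation. Since every map is linear in $\dd\bz$ (or is linearised), the variations transform by the corresponding tangent maps. Let $\omega^n_i$, $\tilde\omega^n_i$, $\tilde\omega^{n+1}_i$ and $\omega^{n+1}_i$ be the densities at these four stages.

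\emph{First and third substeps.} The map $\Do^{\chi,\eta}_{\Delta t/2}$ acts pointwise at each grid node. By \cref{lem:dissipative-contraction} with $a=\alpha_i\Delta t/2$, its tangent map satisfies $T_a^{\mathsf T}\bK T_a=e^{-a}\bK$. The modification on $\ker\bK$ does not affect this. Hence
\[
\tilde\omega^n_i=e^{-\alpha_i\Delta t/2}\,\omega^n_i
\qquad\text{and}\qquad
\omega^{n+1}_i=e^{-\alpha_i\Delta t/2}\,\tilde\omega^{n+1}_i .
\]

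\emph{Middle substep.} The interior scheme satisfies the assumed law \eqref{eq:discrete-ms} for the intermediate variables. This gives
\[
\tilde\omega^{n+1}_i=\tilde\omega^n_i-\Delta t\sum_k\bigl(\delta_k\tilde\kappa^{n+1/2}_k\bigr)_i .
\]

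\emph{Combining.} Substituting the two dissipative identities gives
\[
\omega^{n+1}_i
= e^{-\alpha_i\Delta t/2}\Bigl(e^{-\alpha_i\Delta t/2}\omega^n_i-\Delta t\sum_k(\delta_k\tilde\kappa_k^{n+1/2})_i\Bigr).
\]
Dividing by $\Delta t$ and rearranging yields \eqref{eq:discrete-conformal}. Setting $\alpha_i=0$ recovers \eqref{eq:discrete-ms}, so the scheme is multi-symplectic in the fluid region. Setting $\alpha_i=1/\eta$ gives the contraction factor $e^{-\Delta t/\eta}$ in the solid region.

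\emph{Main obstacle.} The delicate point is locality. The flux term $(\delta_k\tilde\kappa_k)_i$ couples node $i$ to its stencil neighbours. The half-step dissipative maps rescale the variations at those neighbours by $e^{-\alpha_j\Delta t/2}$, which could differ from the factor at node $i$. This is where the hypothesis that $\chi$ is constant on the stencil of $i$ enters: all nodes involved share the factor $e^{-\alpha_i\Delta t/2}$.

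We must also take care that $\tilde\kappa$ is defined from the intermediate variations $\tilde\bz$, not from $\bz$. The post-multiplication by $e^{-\alpha_i\Delta t/2}$ then applies to the whole flux difference, rather than being applied inconsistently inside it.

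We would therefore state explicitly that $\omega^n_i$ is the pointwise bilinear form $\tfrac12\dd\bz_i\wedge\bK\dd\bz_i$. For box schemes, whose density sits at cell midpoints, we would use the stencil-constancy assumption so that the averaging commutes with the scalar factor.
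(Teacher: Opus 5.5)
Your proposal is correct and is essentially the paper's proof: apply \cref{lem:dissipative-contraction} with $a=\alpha_i\Delta t/2$ to each half-step, use \eqref{eq:discrete-ms} for the intermediate variables, substitute, and multiply through by $e^{-\alpha_i\Delta t/2}$ to obtain \eqref{eq:discrete-conformal}. One small correction to your locality remark: the flux is kept in the intermediate variables $\tilde\kappa_k$, so the neighbouring factors $e^{-\alpha_j\Delta t/2}$ never enter it, and stencil-constancy of $\chi$ is only genuinely needed when $\omega_i$ itself couples several nodes (e.g.\ midpoint-averaged densities in box schemes), as you correctly note at the end.
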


\begin{proof}
By \cref{lem:dissipative-contraction}, the first half-step gives
$\tilde\omega^{\,n}_i = e^{-\alpha_i\Delta t/2}\,\omega^n_i$; the
interior step obeys \eqref{eq:discrete-ms}; the final half-step gives
$\omega^{n+1}_i=e^{-\alpha_i\Delta t/2}\,\tilde\omega^{\,n+1}_i$.
Substituting the first and third relations into
\eqref{eq:discrete-ms} and multiplying through by
$e^{-\alpha_i\Delta t/2}$ yields \eqref{eq:discrete-conformal}.
\end{proof}

\begin{corollary}
\label{cor:discrete-wave}
For the wave equation \eqref{eq:wave-ms-matrices} the flux density
$\kappa=c\,\dd u\wedge\dd w$ involves only fields fixed by
$\Do^{\chi,\eta}$, so
$\tilde\kappa^{\,n+1/2}=\kappa^{\,n+1/2}$ and
\eqref{eq:discrete-conformal} holds with the fluxes expressed directly
in the original variables.  For Maxwell's equations the fluxes
$\kappa_k$ mix damped and undamped fields and the intermediate-stage
formulation of \cref{prop:discrete-law} is the natural one.
\end{corollary}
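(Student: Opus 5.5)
The plan is to work directly with the dissipative map and the explicit form of the fluxes for each example, treating the two claims of the corollary separately.

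For the wave equation, I first write out the discrete variational form of the dissipative substep. By \eqref{eq:dissipation-semigroup} with $\bP=\diag(1,0,0)$, we have $\ker\bK=\mathrm{span}\{e_3\}$ and $\ran\bK=\mathrm{span}\{e_1,e_2\}$. So $\bP'=\diag(1,0,1)$, and $\Do^{\chi,\eta}_\tau$ acts as $(u,v,w)\mapsto(u,e^{-\chi\tau/\eta}v,w)$, in agreement with the description after \eqref{eq:dissipation-semigroup}. Its tangent map acts in the same way on $(\dd u,\dd v,\dd w)$, and so it leaves $\dd u$ and $\dd w$ unchanged. The flux density $\kappa=c\,\dd u\wedge\dd w$ from Example~\ref{ex:wave-ms} depends only on $\dd u$ and $\dd w$. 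Therefore $\kappa$ evaluated on the intermediate variables $\tilde\bz$ equals $\kappa$ evaluated on the original variables at the corresponding stage.

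The one point that needs care is the time level $n+1/2$. The discrete flux $\tilde\kappa^{n+1/2}$ of the box/leapfrog scheme is built from $\tilde u,\tilde w$ at times $n$ and $n+1$, or at their midpoint average. The stage values entering $\Phi^{\mathrm{MS}}_{\Delta t}$ are $\Do_{\Delta t/2}\bz^n$, and those leaving it are mapped to $\bz^{n+1}$ by $\Do_{\Delta t/2}$. Neither half-step alters $u$ or $w$, so $\tilde u^n=u^n$, $\tilde u^{n+1}=u^{n+1}$, and likewise for $w$. Since the discrete flux is bilinear in these quantities, it follows that $\tilde\kappa^{n+1/2}=\kappa^{n+1/2}$. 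Substituting this into \eqref{eq:discrete-conformal} gives the stated form with fluxes in the original variables. I would note that $w$ is frozen because it lies in $\ker\bK$, whereas $u$ is frozen because it lies in $\ran\bP$. This is the only step where one must check the definition of $\bP'$, and it is the main, though mild, obstacle.

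For Maxwell, I take $\bP=\diag(0,\bI_3)$ and $\ker\bK=\{0\}$, so $\Do$ maps $\dd\bE\mapsto e^{-\chi\tau/\eta}\dd\bE$ and leaves $\dd\bH$ fixed. Writing out $\kappa_k=\tfrac12\dd\bz\wedge\bL_k\dd\bz=\tfrac12(\dd\bE\wedge\bm C_k\dd\bE+\dd\bH\wedge\bm C_k\dd\bH)$, the $\bE$-part is scaled by $e^{-2\chi\tau/\eta}$ while the $\bH$-part is unchanged. So $\tilde\kappa_k$ is not a scalar multiple of $\kappa_k$, and it cannot be rewritten in the original variables. This justifies keeping the intermediate-stage formulation of \cref{prop:discrete-law}, which needs no further argument beyond that proposition.
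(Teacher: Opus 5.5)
Your proposal is correct and follows the paper's own reasoning, which is the one-line observation built into the statement: $\Do^{\chi,\eta}$ fixes $u$ and $w$, hence $\dd u$ and $\dd w$, so the flux $c\,\dd u\wedge\dd w$ at the intermediate stage equals the flux in the original variables, whereas the Maxwell fluxes $\tfrac12(\dd\bE\wedge\bm{C}_k\dd\bE+\dd\bH\wedge\bm{C}_k\dd\bH)$ involve the damped field $\bE$. Your computation $\bP'=\diag(1,0,1)$ and your time-level bookkeeping make this precise. One wording point: for Maxwell, $\tilde\kappa_k$ can be written in the original variables via $\dd\tilde\bE^{n}=e^{-\alpha\Delta t/2}\dd\bE^{n}$ and $\dd\tilde\bE^{n+1}=e^{\alpha\Delta t/2}\dd\bE^{n+1}$; what fails is only that it neither equals $\kappa_k^{n+1/2}$ nor is a uniform rescaling of it.
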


\begin{remark}
Near $\partial\Omega_s$, where $\chi$ jumps across the stencil, the
law \eqref{eq:discrete-conformal} holds with $\alpha_i$ replaced by
the local grid values; the statement ``discretely preserved'' is thus
to be understood either as an exact finite-difference identity of the
above type or, for smoothed indicator functions, in the sense of
backward error analysis \cite{moore2003backward,hairer2006geometric}.
An analogous computation shows that the scheme reproduces the discrete
counterparts of the energy identities
\eqref{eq:energy-identity}--\eqref{eq:poynting}. No energy is created
or destroyed in the fluid region beyond the exact transport of the
multi-symplectic interior scheme, and dissipation occurs only through
the exact factors $e^{-\chi\Delta t/(2\eta)}$ acting on the damped
fields.
\end{remark}

\section{Conformal symplectic neural operators}
\label{sec:operators}

\subsection{From SympNets to structure-preserving operator learning}

SympNets \cite{jin2020sympnets} approximate symplectic maps on
$\R^{2n}$ by composing elementary symplectic layers, e.g., shear maps
generated by gradient potentials; every network in the class is
symplectic by construction, and the class is dense in the set of
symplectic maps.  Lifting this idea from finite-dimensional phase
spaces to function spaces yields \emph{symplectic neural operators} \cite{makara2026symplecticneuraloperatorslearning}, where the state vector $\bz\in\R^{2n}$ is replaced by a field
$\bz(\bx)$, symplectic maps are replaced by multi-symplectic evolution
operators, and the layers are chosen so that the multi-symplectic
conservation law $\partial_t\omega+\nabla\!\cdot\!\kappa=0$ is preserved exactly.
More concretely, if we write the fields in $\bK$-conjugate pairs
$(q,p)$ (i.e. $(u,v)$ for the wave equation, $(\bH,\bE)$ for Maxwell's
equations) a layer of the operator $\So_{\theta,\Delta t}$ alternates
kick and drift shears
\begin{equation}
\label{eq:sympnet-layers}
\begin{pmatrix} q \\ p\end{pmatrix}
\mapsto
\begin{pmatrix} q \\ p - \Delta t\,\nabla_q V_\theta[q]\end{pmatrix},
\qquad
\begin{pmatrix} q \\ p\end{pmatrix}
\mapsto
\begin{pmatrix} q + \Delta t\,\nabla_p T_\theta[p]\\
p\end{pmatrix},
\end{equation}
where $V_\theta$ and $T_\theta$ are learnable functionals (e.g.,
integral functionals whose densities are parameterised by neural
networks acting on the fields) and the gradients are
$L^2$ functional derivatives.  Each shear is the exact flow of a
Hamiltonian system and is therefore multi-symplectic and the composition
of such steps maintains this structure.  This mirrors the way symplectic integrators serve as building blocks for structure-preserving networks in finite dimensions.

\subsection{Architecture}

For the Brinkman-penalised class \eqref{eq:penalised} the appropriate
invariance is the conformal law \eqref{eq:brinkman-law}.  Guided by
the splitting \eqref{eq:scheme}, we define the \emph{conformal
symplectic neural operator} (CSNO)
\begin{equation}
\label{eq:csno}
\Go_\theta
\;=\;
\Do^{\chi,\eta}_{\Delta t/2}
\;\circ\;
\So_{\theta,\Delta t}
\;\circ\;
\Do^{\chi,\eta}_{\Delta t/2},
\end{equation}
where $\Do^{\chi,\eta}_{\tau}$ is the exact penalisation
semigroup \eqref{eq:dissipation-semigroup}; that is, a fixed, parameter-free multiplication operator depending on the geometry indicator function $\chi$ that damps the fields selected by the structure of the PDE ($u_t$ for the wave equation, $\bE$ for
Maxwell's equations). The $\So_{\theta,\Delta t}$ component is a learnable multi-symplectic evolution operator built from layers
\eqref{eq:sympnet-layers}.  The geometry can additionally be supplied
to $\So_\theta$ through conditioning of $V_\theta$, $T_\theta$ on
$\chi$, in which case multi-symplecticity must be maintained by
letting $\chi$ enter only the potentials, not the symplectic
structure.

\begin{proposition}[Exact conformal law of the CSNO]
\label{prop:csno-law}
Assume \eqref{eq:compatibility}.  For every parameter $\theta$ and
every admissible indicator function $\chi$, the operator
\eqref{eq:csno} satisfies pointwise
\begin{equation}
\label{eq:csno-law}
\omega^{\,n+1}(\bx) = e^{-\alpha(\bx)\Delta t}\,\omega^{\,n}(\bx)
\;-\;\Delta t\, e^{-\alpha(\bx)\Delta t/2}\,
\nabla\!\cdot\!\tilde\kappa^{\,n+1/2}(\bx),
\qquad
\alpha(\bx)=\frac{\chixx}{\eta},
\end{equation}
where $\tilde\kappa^{n+1/2}$ is the flux of the interior
multi-symplectic operator at the intermediate stage; in particular the
spatially integrated symplectic density obeys the exact contraction
dictated by the penalisation, independently of training.
\end{proposition}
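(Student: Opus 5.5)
The argument follows the proof of \cref{prop:discrete-law}, with the discrete interior law replaced by the multi-symplectic conservation law of the learned operator $\So_{\theta,\Delta t}$. The time step factors as three maps. The first and last are pointwise multiplication operators $\Do^{\chi,\eta}_{\Delta t/2}$, and the middle one is $\So_{\theta,\Delta t}$. We linearise each map and track the symplectic density $\omega=\tfrac12\dd\bz\wedge\bK\dd\bz$ through the corresponding tangent maps. Since $\Do^{\chi,\eta}_\tau$ is linear and acts pointwise in $\bx$, its tangent map at $\bx$ is $T_{a(\bx)}$ with $a(\bx)=\alpha(\bx)\Delta t/2$, modified on $\ker\bK$. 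By \cref{lem:dissipative-contraction}, which is where the compatibility condition \eqref{eq:compatibility} enters, this gives $\tilde\omega^{\,n}(\bx)=e^{-\alpha(\bx)\Delta t/2}\omega^n(\bx)$ and $\omega^{n+1}(\bx)=e^{-\alpha(\bx)\Delta t/2}\tilde\omega^{\,n+1}(\bx)$. Because the map is pointwise, no flux is generated and nothing depends on $\theta$.

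For the middle map we need a local law of the form $\tilde\omega^{\,n+1}-\tilde\omega^{\,n}+\Delta t\,\nabla\!\cdot\!\tilde\kappa^{\,n+1/2}=0$, which serves as the definition of the intermediate flux. We prove it layer by layer for the shears \eqref{eq:sympnet-layers} and then telescope. Consider a kick $p\mapsto p-\Delta t\,\nabla_q V_\theta[q]$. Its linearisation is $\dd p\mapsto \dd p-\Delta t\,\mathcal{H}\dd q$, where $\mathcal{H}=\delta^2V_\theta$ is a symmetric operator, generally nonlocal or differential. The change in the density is $-\Delta t\,\dd q\wedge\mathcal{H}\dd q$ pointwise. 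Its spatial integral vanishes by the symmetry of $\mathcal{H}$. For the pointwise statement we write it as a divergence. If $V_\theta$ is an integral functional with local density, $\dd q\wedge\mathcal{H}\dd q$ is an exact divergence; this is the standard computation behind multi-symplectic conservation laws. We then define the layer flux accordingly. Drifts are handled in the same way, and summing the layer contributions gives $\tilde\kappa^{\,n+1/2}$. We state this as the meaning of ``flux of the interior operator'' in the proposition, and note that for a genuinely nonlocal $V_\theta$ the flux is only defined as a potential (nonlocal) flux.

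Combining the three steps,
\[
\omega^{n+1}=e^{-\alpha\Delta t/2}\bigl(\tilde\omega^{\,n}-\Delta t\,\nabla\!\cdot\!\tilde\kappa^{\,n+1/2}\bigr)
=e^{-\alpha\Delta t}\omega^n-\Delta t\,e^{-\alpha\Delta t/2}\nabla\!\cdot\!\tilde\kappa^{\,n+1/2},
\]
which is \eqref{eq:csno-law}. For the integrated statement we integrate over the box. With periodic or compactly supported variations, the flux term integrates to zero in the fluid region, where $\alpha=0$. In general, what survives is the contraction $\int e^{-\alpha\Delta t}\omega^n$ plus a flux term weighted by $e^{-\alpha\Delta t/2}$. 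The ``exact contraction'' claim should therefore be read as the contraction of $\omega$ up to the conservative flux. It holds for every $\theta$ because no step used the parameters.

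The main obstacle is the middle step, namely showing that each learned layer is locally, and not only globally, multi-symplectic with a well-defined flux. This is also what pins down how $\chi$ may enter $V_\theta$ and $T_\theta$: only through the potentials, so that the $q$- and $p$-Hessians stay symmetric. A secondary subtlety is the commutation of the divergence with the factor $e^{-\alpha\Delta t/2}$ at the interface, where $\alpha$ jumps. Since the final factor multiplies pointwise after the divergence has been formed, we avoid any product rule there.
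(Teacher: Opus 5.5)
Your proposal is correct and follows the paper's proof, which reruns the argument of \cref{prop:discrete-law}. Each dissipative half-step scales $\omega$ pointwise by $e^{-\alpha\Delta t/2}$ via \cref{lem:dissipative-contraction}, the interior operator $\So_{\theta,\Delta t}$ is taken to satisfy a local multi-symplectic law that defines $\tilde\kappa^{\,n+1/2}$, and the three relations are combined. Your layer-by-layer derivation of that interior law for the kick/drift shears, with its caveat for nonlocal potentials, and your remark that the integrated statement holds only up to the $e^{-\alpha\Delta t/2}$-weighted flux term, make explicit what the paper takes for granted.
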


The proof is identical to that of \cref{prop:discrete-law}, with
$\Phi^{\mathrm{MS}}_{\Delta t}$ replaced by $\So_{\theta,\Delta t}$
and \cref{lem:dissipative-contraction} applied verbatim, though there are a few points worth emphasizing.  First, the conformal law
\eqref{eq:csno-law} is a hard architectural constraint that cannot be violated by optimisation error, distribution shift at inference time, or any other adversarial strategy. Second, because the geometry enters $\Do^{\chi,\eta}$ exactly and $\chi$ is an input rather than a training-time constant, a single trained CSNO applies to arbitrary domain shapes within the resolution of the grid, addressing the complex-domain challenge of \cref{sec:intro} uniformly across the penalised class. Third, the model and data share their geometric invariants by construction, since the architecture composes with the integrator of \cref{sec:integrators} in the data pipeline. However, in the case of the CSNO model the conservative substeps are learnable from data.

\begin{remark}
Density results for SympNets \cite{jin2020sympnets} suggest that
compositions of layers \eqref{eq:sympnet-layers} are dense, in
appropriate topologies, in classes of multi-symplectic evolution
operators. We expect that this property generalises, however a rigorous approximation theory in the operator setting is beyond the scope of this paper and is the subject of ongoing work. Combined with the exactness of the dissipative factor, this property would yield universal approximation of the penalised solution operator within the conformal class.  
\end{remark}

\section{Numerical experiments}
\label{sec:experiments}

We demonstrate some numerical results using our proposed integrators and neural operators. All experiments below use Cartesian grids on rectangular boxes and the domain geometry enters exclusively through the indicator function $\chi$. The computational box carries homogeneous Dirichlet (perfect electrical conductor condition for Maxwell) boundary conditions unless otherwise specified. We treat the wave equation in
\Cref{subsec:exp-1d,subsec:exp-2d,subsec:exp-op}, while \cref{subsec:exp-maxwell} treats Maxwell's equations,
including the fully three-dimensional conductivity penalisation.  

\subsection{Validation of the penalisation: 1D reflection}
\label{subsec:exp-1d}

We first verify the qualitative behavior of the penalised model
\eqref{eq:wave-brinkman} in one dimension (with $c=1$, discretised over $2048$ grid points with a Strang scheme \eqref{eq:scheme} at half the CFL step).  A Gaussian pulse is
launched toward a solid object $\Omega_s=[2,4]$ inside the box
$\Omega=[-6,6]$ and the solution at $T=5.0$ is compared for $\eta\in\{0.1,\,0.02,\,0.005\}$ in \cref{fig:brinkman1d}.  As $\eta$ decreases, an increasing fraction of the incident wave is reflected by the solid domain, which is representative of the approximation of the rigid-wall boundary condition improving as penalisation gets stronger. The
maximal field penetration $\max_{\Omega_s}|u(\cdot,T)|$ decreases
monotonically from $9.2\times10^{-2}$ ($\eta=0.1$) through
$5.1\times10^{-2}$ ($\eta=0.02$) to $2.7\times10^{-2}$
($\eta=0.005$), consistent with the known $O(\eta^{1/2})$-type
convergence of Brinkman-penalised models
\cite{angot1999penalisation,kolomenskiy2009fourier}.  A
self-convergence study in the time step on the same grid
(with $\eta=0.02$) 
yields the error
ratios $2.02$, $2.07$, $2.32$ under successive halvings of $\Delta
t$, confirming the second-order accuracy of the splitting
\eqref{eq:scheme} claimed in \cref{subsec:splitting}.

\begin{figure}[t]
\centering
\includegraphics[width=0.72\linewidth]{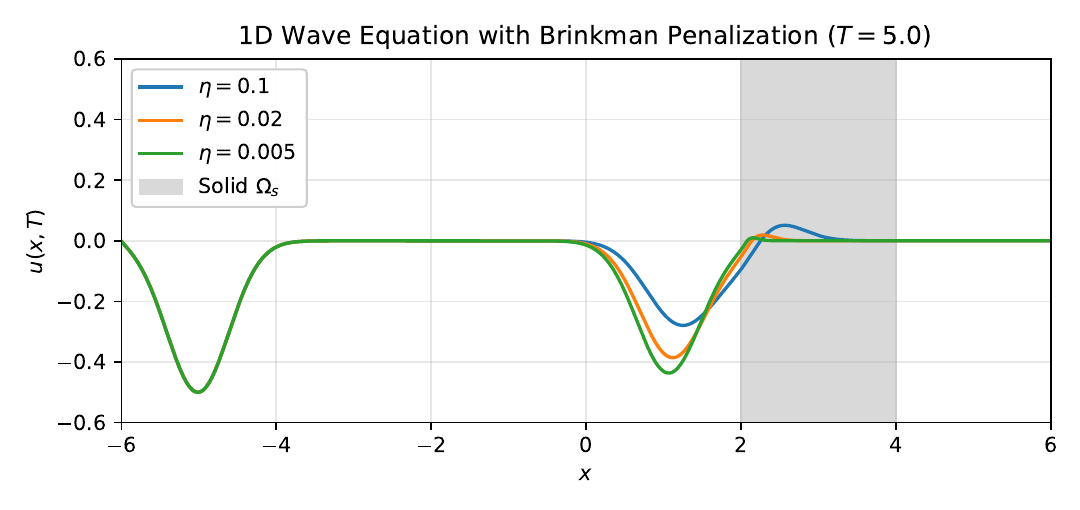}
\caption{One-dimensional wave equation with Brinkman penalisation at
$T=5.0$ for $\eta=0.1$, $0.02$, $0.005$ (solid region shaded).
Smaller $\eta$ yields stronger reflection; $\eta=0.005$ approximates a
rigid wall almost perfectly.}
\label{fig:brinkman1d}
\end{figure}

\subsection{Structure-preserving integration: 2D scattering by a
circular obstacle}
\label{subsec:exp-2d}

We next apply the splitting scheme \eqref{eq:scheme} to the
two-dimensional acoustic wave equation with a circular obstacle
($\eta=0.02$, $256^2$ grid on $[-4,4]^2$). 
In this experiment a Gaussian pulse is launched at $t=0$ which collides with the obstacle and produces a reflected and diffracted wave
patterns. We show snapshots at $t=0,\,1.5,\,3.0,\,4.0$ in
\cref{fig:wave2d}.  Waves penetrating the solid domain vanish
rapidly, on the time scale $\eta$, in accordance with the local
contraction \eqref{eq:brinkman-law}.

\Cref{fig:wave2d-energy} quantifies the energy behavior. 
Here we compare the discrete energy $H(t)$ of the splitting scheme with RK4 and explicit Euler applied to the full damped system on the same grid and time step, together with the
exact $H(0)-\int_0^t\!\int(\chi/\eta)\,u_t^2$, the discrete
counterpart of \eqref{eq:energy-identity}.  The split scheme tracks
the exact energy throughout. We validate this through the residual
$|H(T)+D(T)-H(0)|/H(0)=8.7\times10^{-3}$, where $D$ is the trapezoidal
time-quadrature of the instantaneous dissipation, which is at the level of the quadrature. We note that in this example RK4 attains a comparable
$4.1\times10^{-3}$ at this step size, though without the structural
guarantee of \cref{prop:discrete-law}, whereas explicit Euler
exhibits catastrophic energy growth (about one hundred orders of
magnitude by $T=6$).  Additionally, because the
stiff factor $\chi/\eta$ is confined to the exactly integrated
substep, the splitting scheme remains stable uniformly in $\eta$, while
any explicit one-step method applied to the full system is
constrained by $\Delta t/\eta$ lying in its stability region and
degrades as $\eta\to0$.

\begin{figure}[t]
\centering
\includegraphics[width=\linewidth]{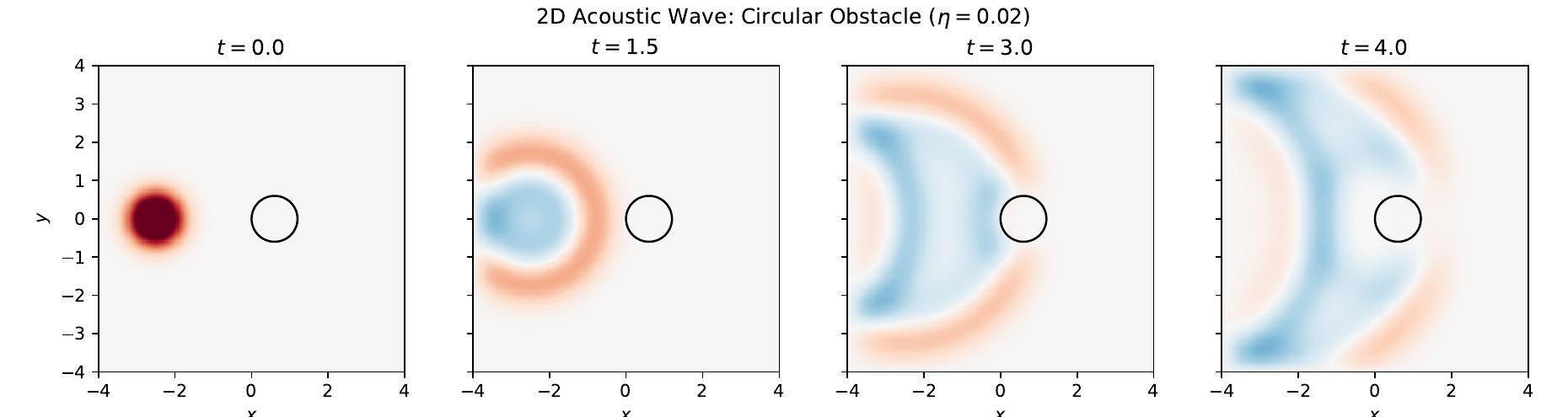}
\caption{2D acoustic wave scattering by a circular obstacle
($\eta=0.02$) computed with the conformal multi-symplectic split
scheme \eqref{eq:scheme}: snapshots at $t=0$, $1.5$, $3.0$, $4.0$.
The pulse is reflected by the obstacle while the field inside the
solid decays on the fast time scale $\eta$.}
\label{fig:wave2d}
\end{figure}

\begin{figure}[t]
\centering
\includegraphics[width=0.72\linewidth]{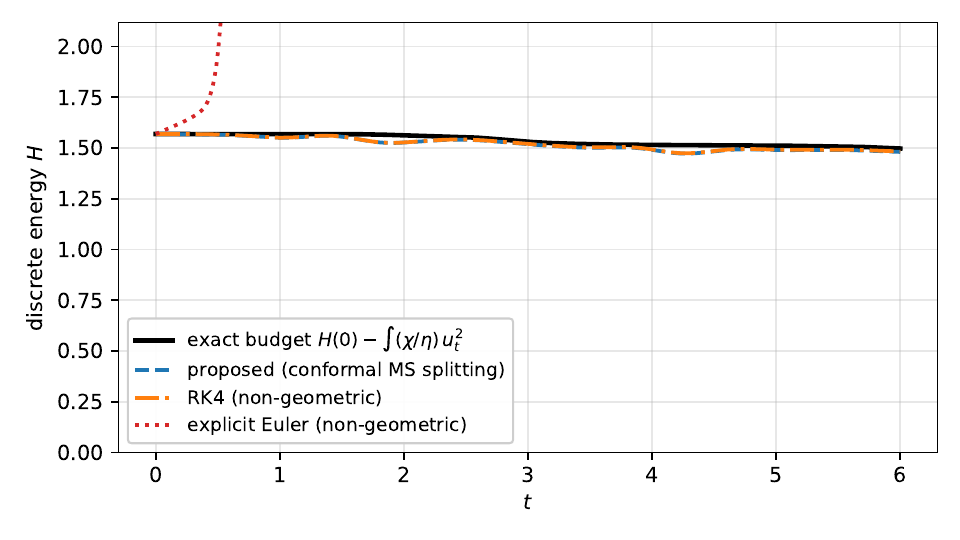}
\caption{Discrete energy $H(t)$ for the 2D scattering problem of
\cref{fig:wave2d}: exact budget
$H(0)-\int(\chi/\eta)u_t^2$ (black), the proposed conformal
multi-symplectic splitting, RK4, and explicit Euler on the same grid
and time step. The proposed scheme tracks the exact budget whereas explicit Euler exhibits catastrophic energy growth (leaving the frame).}
\label{fig:wave2d-energy}
\end{figure}

\subsection{Operator learning: wave propagation past an airfoil}
\label{subsec:exp-op}

Finally we evaluate the CSNO \eqref{eq:csno} on an operator-learning
task. Here we predict the evolution of the penalised wave field in the
presence of airfoil-shaped scatterers, with the geometry modelled
through $\chi$.  The dataset consists of trajectories past randomly
sampled symmetric (NACA) airfoils (these are randomly positioned with maximum thickness ratio in $[0.08,0.18]$, chord in $[1.2,2.2]$, angle of attack in $[-25^\circ,25^\circ]$) excited by random Gaussian
pulses. We integrate these with the structure-preserving scheme of
\cref{sec:integrators} so that the trajectories satisfy appropriate conformal and energy laws by construction, and use these as training data.  In the CSNO, the interior
operator $\So_\theta$ instantiates the layers
\eqref{eq:sympnet-layers} with convolutional potential densities
conditioned on $\chi$, preceded by the exact St\"ormer--Verlet step
of the known wave Hamiltonian as a fixed leading layer
and learnable correction layers initialized near the identity. The
dissipative factors are the exact multiplication operators of
\eqref{eq:csno}.  We compare against a Fourier neural operator
baseline \cite{li2021fourier} trained on the same one-step data with
the same inputs $(u,v,\chi)$ and the same loss. In both cases the solutions are rolled out over equal number of timesteps to produce the solution estimate.

\Cref{fig:operator} shows a representative rollout prediction on a
held-out geometry together with the evolution of the discrete energy
$H$.  Both models capture the gross features of the scattered field,
but their spectral and energy behavior diverge over time. The energy of the FNO prediction increases and it produces unnatural oscillations by time $t=4$. The CSNO on the other hand, maintains solution estimates and energy that closely tracks the ground truth over the entire prediction window. This is likely the operator-learning analogue of the numerical divergence observed with Euler style solvers of the previous subsection. That is, without an architectural guarantee, the learned operator introduces spurious growth in the fluid region, while the conformal symplectic architecture controls energy in the fluid region while confining dissipation to the solid region at the rate prescribed by the penalisation.

\begin{figure}[t]
\centering
\includegraphics[width=0.9\linewidth]{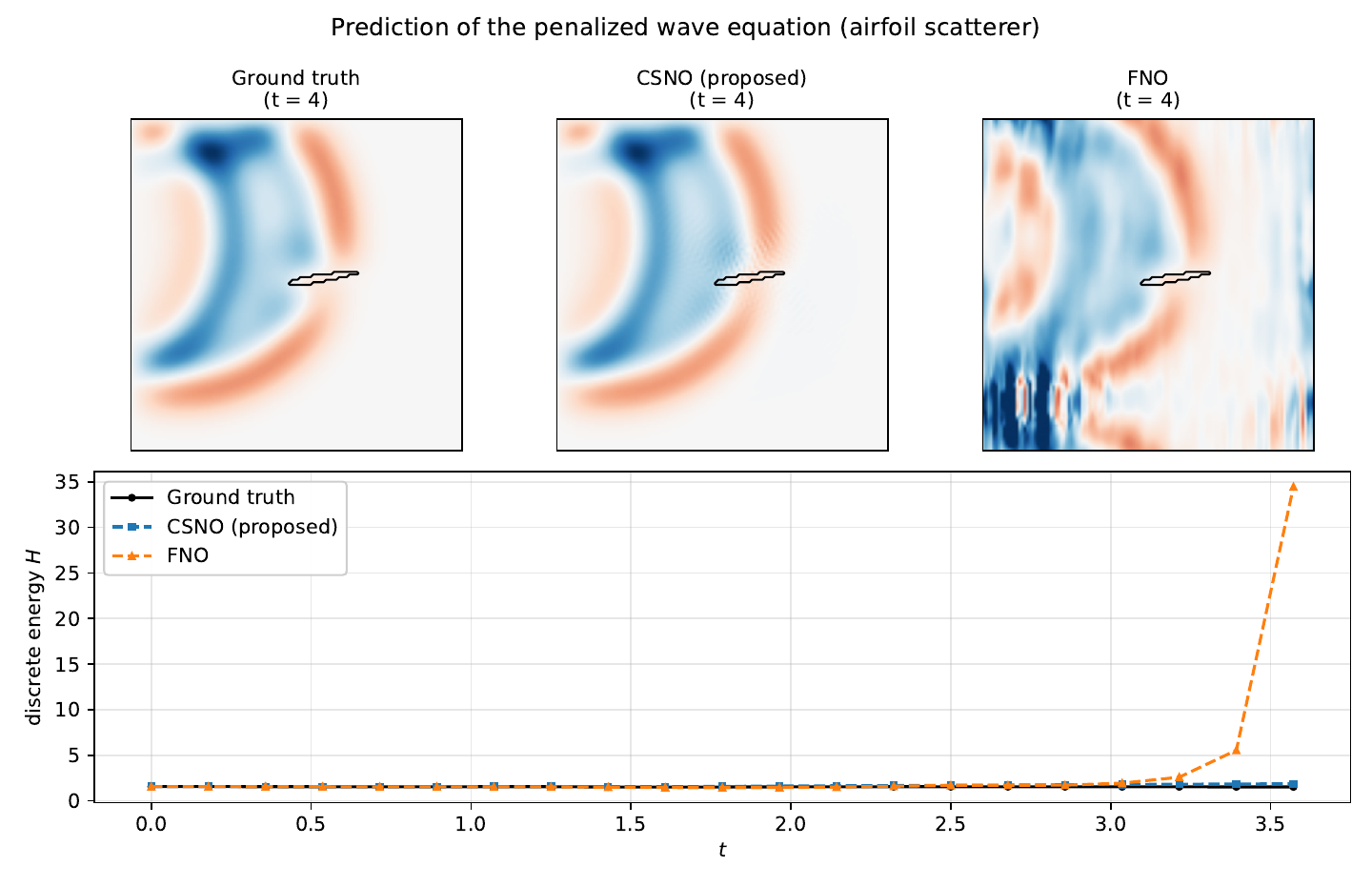}
\caption{Operator learning for the penalised wave equation with an
airfoil-shaped scatterer.  Top: ground truth, CSNO
(proposed), and FNO predictions of the wave field.  Bottom: discrete
energy $H$ versus time.  The FNO energy explodes unnaturally, while the proposed conformal symplectic neural operator tracks the ground-truth
energy closely.}
\label{fig:operator}
\end{figure}

\subsection{Maxwell's equations: conducting obstacles in 2D and 3D}
\label{subsec:exp-maxwell}

Our final experiment are the conductivity-penalised Maxwell
system \eqref{eq:maxwell-brinkman} integrated with the conformal
split-Yee scheme. Here we apply the exact dissipative substep
$\bE\mapsto e^{-\chi\Delta t/(2\eta)}\bE$ of
\eqref{eq:dissipation-semigroup} around a time-symmetric Yee
leapfrog, which is a composition of exact Hamiltonian shears in the
$\bK$-conjugate pair $(\bH,\bE)$ and hence multi-symplectic.

\paragraph{TM mode: validation and cross-check.}
\Cref{fig:maxwell-tm} shows TM scattering of an $E_z$ pulse by a
circular conductor (we apply a $256^2$ grid on $[-4,4]^2$ with PEC box
boundaries and penalisation parameter $\eta=0.02$). The scattering pattern reproduces \cref{fig:wave2d}, as you would expect given Remark \ref{rem:tm}. As we have seen throughout, the field expulsion from the conductor improves as $\eta\to0$, evidenced by the maximal
$|E_z|$ inside the conductor at $T=4$ decreasing from
$3.3\times10^{-2}$ ($\eta=0.1$) through $1.1\times10^{-2}$
($\eta=0.02$) to $4.1\times10^{-3}$ ($\eta=0.005$), the discrete
manifestation of the perfect-conductor limit of
\cref{subsec:brinkman}.  The TM setting also affords a quantitative
consistency cross-check of Remark \ref{rem:tm} between two different
structure-preserving realisations of the same continuum model. In the
fluid region the two-step recurrence for $E_z$ induced by the Yee
scheme coincides exactly with the Verlet recurrence of the wave
solver of \cref{subsec:exp-2d}, so the two split schemes differ only
in how the damping enters inside the solid ($E_z$ itself for
Maxwell versus $u_t$ for the wave formulation), an
$O(\Delta t/\eta)$ splitting-arrangement difference concentrated in
the penalisation boundary layer.  Consistently, the fluid-region
relative $L^2$ difference between the two computed solutions at
$T=4$ decreases essentially linearly in $\Delta t$ under refinement:
$0.154$, $0.077$, $0.051$ on $128^2$, $256^2$, $384^2$ grids
(CFL-proportional time steps, $\eta=0.02$), confirming that both
schemes converge to the same continuum solution.

\begin{figure}[t]
\centering
\includegraphics[width=\linewidth]{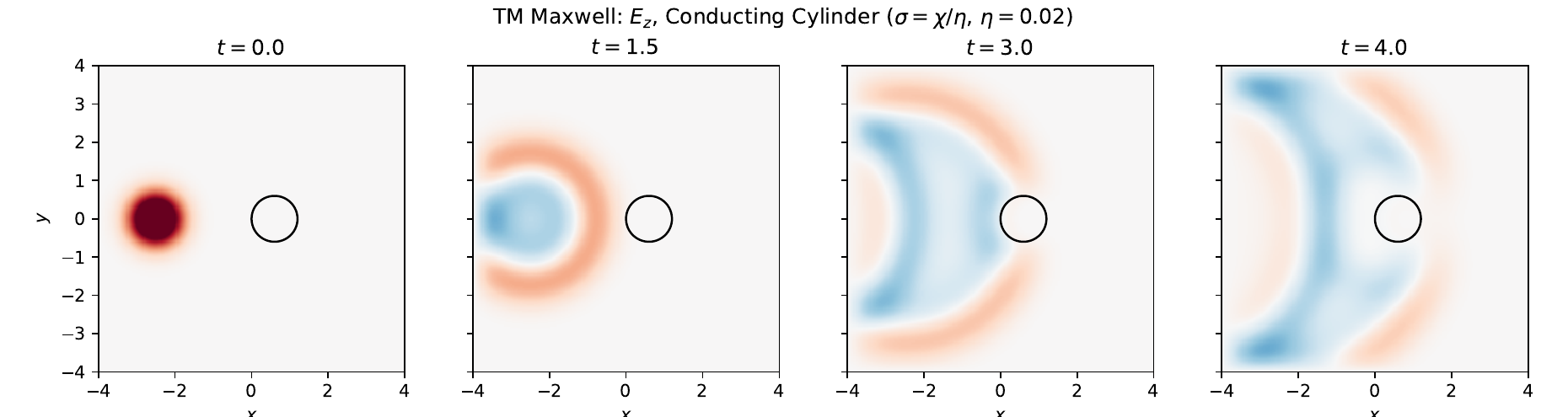}
\caption{TM Maxwell scattering of an $E_z$ pulse by a circular
conductor modeled by the conductivity penalisation
($\sigma=\chi/\eta$, $\eta=0.02$), computed with the conformal
split-Yee scheme.  By \cref{rem:tm} the configuration is equivalent
to \cref{fig:wave2d}, and the scattering pattern coincides.}
\label{fig:maxwell-tm}
\end{figure}

\paragraph{Fully three-dimensional scattering by a conducting
sphere.}
\Cref{fig:maxwell3d} shows the mid-plane $E_z$ field of a Gaussian
pulse scattering off a conducting sphere on a periodic Yee lattice
($64^3$, box $[-2,2]^3$, $\eta=0.02$), with the indicator function
evaluated at the Yee location of each electric-field component; the
maximal $|E_z|$ inside the conductor at $T=2.4$ is
$3.2\times10^{-2}$.  
\Cref{fig:maxwell3d-energy} reports the electromagnetic energy on a $48^3$ lattice up to $T=3$ against the exact Poynting budget
$H(0)-\int_0^t\!\int(\chi/\eta)|\bE|^2$ of \eqref{eq:poynting}. The
split-Yee scheme tracks this budget throughout, with a residual of $1.0\times10^{-2}$ at the level of the time-quadrature
while explicit Euler exhibits catastrophic energy growth.  Similar to \cref{subsec:exp-2d}, only the split scheme remains stable uniformly in $\eta$.

\begin{figure}[t]
\centering
\includegraphics[width=\linewidth]{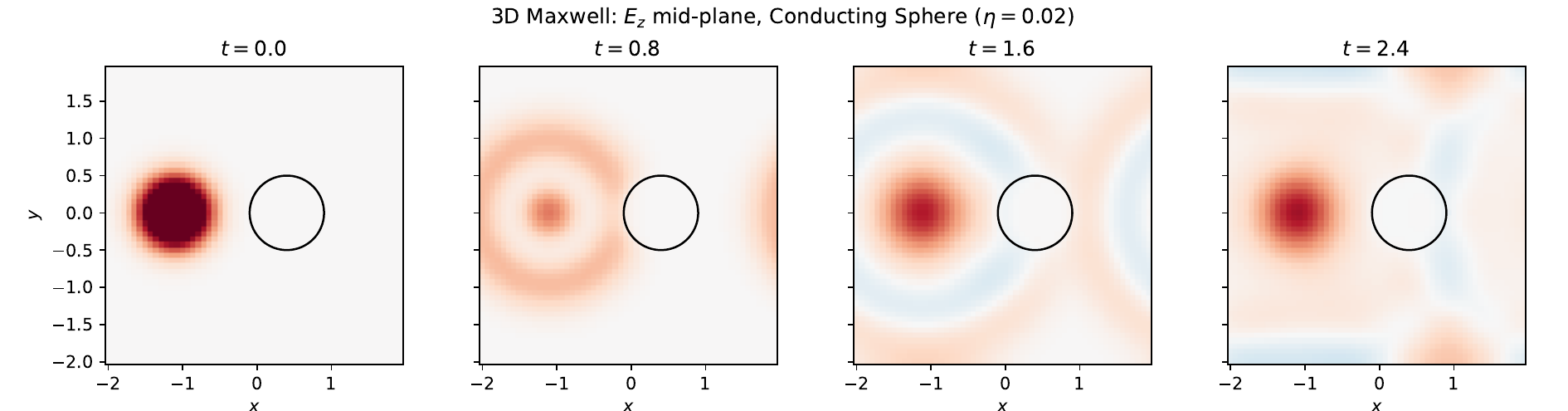}
\caption{Fully three-dimensional Maxwell scattering by a conducting
sphere via the conductivity penalisation ($\eta=0.02$, periodic Yee
lattice): mid-plane slices of $E_z$.  The field is expelled from the
conductor on the time scale $\eta$.}
\label{fig:maxwell3d}
\end{figure}

\begin{figure}[t]
\centering
\includegraphics[width=0.72\linewidth]{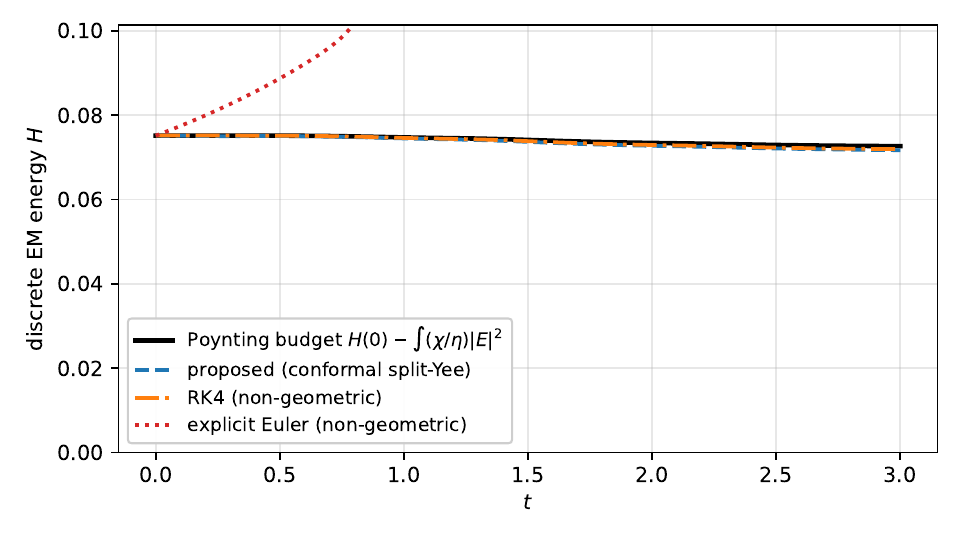}
\caption{Discrete electromagnetic energy for the 3D conducting-sphere
problem: exact Poynting budget $H(0)-\int(\chi/\eta)|\bE|^2$ (black),
the proposed conformal split-Yee scheme, RK4, and explicit Euler on
the same lattice and time step.  The proposed scheme tracks the
budget while explicit Euler exhibits catastrophic energy growth (leaving
the frame).}
\label{fig:maxwell3d-energy}
\end{figure}

\paragraph{Operator learning for three-dimensional Maxwell.}
Finally, we demonstrate the electromagnetic instantiation of the CSNO.  The dataset consists of 3D trajectories of \eqref{eq:maxwell-brinkman} past randomly sampled conducting obstacles past one or two randomly sampled spheres
excited by random Gaussian pulses. Once again these are generated with the conformal split-Yee integrator so that the data satisfy the discrete conformal and Poynting laws by construction. The CSNO takes the form \eqref{eq:csno} with the exact per-component
damping acting on $\bE$ and the learnable multi-symplectic operator
$\So_\theta$ acting on the $(\bH,\bE)$ conjugate pair. A fixed
Yee leapfrog of the vacuum Maxwell Hamiltonian is applied as the
leading layer, followed by kick/drift shears
\eqref{eq:sympnet-layers} whose three-dimensional convolutional
potential densities are conditioned on $\chi$ and initialized near
the identity.  The baseline is a three-dimensional FNO mapping
$(\bE,\bH,\chi)$ to the next state, trained on the same one-step data
with the same loss.  
Both models are trained for $150$ epochs of Adam (learning rate $10^{-3}$, batch size $8$) on a single NVIDIA L4 GPU. The CSNO uses two
kick/drift layer pairs of width $12$ ($2.5\times10^4$ parameters), and the FNO baseline uses three spectral layers with six Fourier modes per direction and width $12$ ($3.7\times10^5$ parameters, counting complex spectral weights as single parameters).

\Cref{fig:maxwell-csno} shows a rollout over the full prediction
window on a held-out geometry together with the electromagnetic
energy $H(t)$.  Over the test set, the relative $L^2$ error of the
electric field at the final frame is $0.073\pm0.017$ for the CSNO, which is over an order of magnitude smaller than the $0.988\pm0.015$ attained by the FNO baseline. The relative deviation of the final
energy from the ground truth is similar at $0.5\%$ for the CSNO versus $56\%$ for the FNO. The FNO prediction loses more than half of the electromagnetic energy through
spurious dissipation within a few characteristic times, while the
CSNO energy is visually indistinguishable from the ground truth
throughout, its dissipation confined to the conductors at the exact
rate prescribed by the penalisation
(\cref{prop:csno-law}).  Together with \cref{subsec:exp-op}, this
confirms that the conformal architecture transfers across the
penalised class as predicted by the theory.

\begin{figure}[t]
\centering
\includegraphics[width=0.92\linewidth]{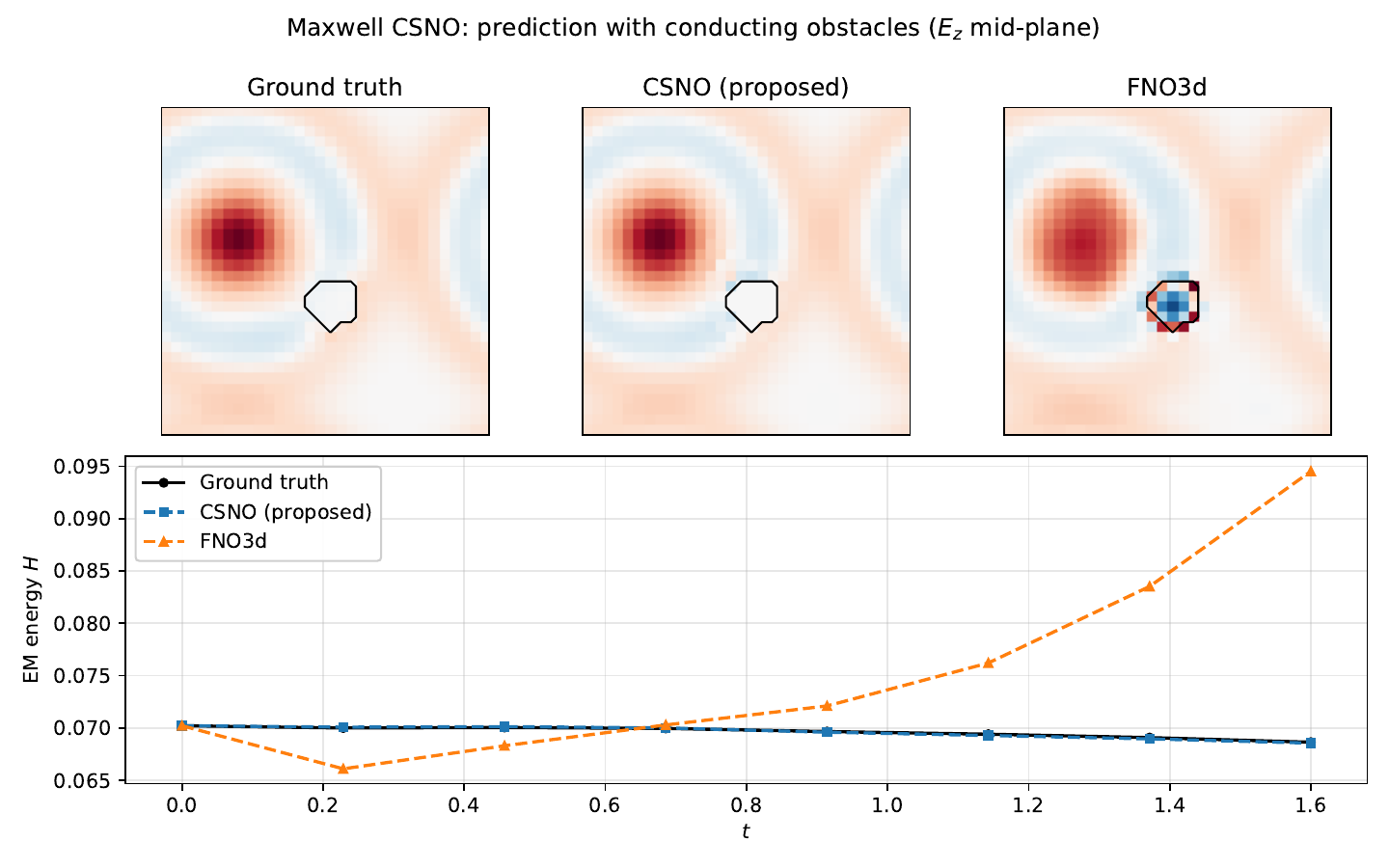}
\caption{Operator learning for the conductivity-penalised 3D Maxwell
equations with randomly sampled conducting obstacles.  Top:
mid-plane $E_z$ at the final frame of a rollout on a held-out
geometry. Left to right these show ground truth, electromagnetic CSNO (proposed), and FNO3d
baseline.  Bottom: electromagnetic energy $H(t)$ along the rollout.
The FNO energy decays spuriously to less than half of the true
value, while the CSNO tracks the ground-truth energy to within
$0.5\%$ at the final time.}
\label{fig:maxwell-csno}
\end{figure}

\section{Conclusion and future work}
\label{sec:conclusion}

We have shown that the Brinkman-type penalisations of multi-symplectic
Hamiltonian PDEs originally introduced as a numerical convenience, actually result in an exact geometric structure. Specifically, such systems have multi-conformal symplectic structure with spatially localised damping rate $\chi(\bx)/\eta$ whenever the penalised components are paired with unpenalised ones by the symplectic structure matrix. This realisation allows us to leverage structure preserving numerical methods to perform integration and operator learning on arbitrary geometries without specialised meshing or retraining, whilst attaining superior accuracy and conservation properties compared to standard approaches. We outline an algebraic condition expressed by the anticommutation relation
$\bK=\bP\bK+\bK\bP$, and show that this is satisfied by the wave equation with the classical Brinkman friction and by Maxwell's equations penalised by an artificial Ohmic conductivity
(\cref{thm:general,cor:wave,cor:maxwell}).  To establish this structure we derived an explicit quadratic modification
of the Hamiltonian density $\frac{\chi}{4\eta}\bz^{\mathsf T}(\bI-2\bP)\bK\bz$ that is supported on the solid region.  This observation allows us to view Brinkman penalised systems as members of a well-understood geometric class, and immediately suggests numerical methods such as Strang splitting 
which yield second-order schemes satisfying a discrete
conformal multi-symplectic conservation law and reproducing the correct
local energy budget (\cref{prop:discrete-law}).  The same composition,
with the conservative factor made learnable, defines conformal
symplectic neural operators that satisfy the conformal conservation
law by construction for arbitrary geometries supplied through the
indicator function (\cref{prop:csno-law}), uniformly across the
penalised class. Empirically we demonstrated that these methods avoid the unphysical energy drift exhibited by unconstrained neural operators, in both the acoustic and the fully three-dimensional electromagnetic settings.

Several directions remain open.  On the theoretical side, a
detailed treatment of nonlinear Hamiltonian PDEs in the penalised
class remains to be explored. This could, for example, include nonlinear wave and Schr\"odinger-type equations with
partially damped penalisations, together with convergence rates as
$\eta\to0$ that respect the conformal structure. Alternatively an investigation of the variational and geometric (Lagrangian, multi-Dirac) structures underlying multi-conformal symplectic PDEs could be explored. Another slightly tangential direction could consider the type of damping arising in \eqref{eq:naive-reduced} as an alternative to Brinkman style penalisation and investigate its intrinsically conformal structural analytically and numerically. If such a method is convergent it could then apply to every multi-symplectic PDE without a compatibility condition or modification of the Hamiltonian density.  On the computational side, the Maxwell experiments of \cref{subsec:exp-maxwell} validate the conformal split-Yee scheme on conducting obstacles in two and three dimensions and demonstrate the electromagnetic CSNO on families of random conductor geometries. Large-scale scattering by conductors of complex shape, and the coupling of the electromagnetic and acoustic settings within a single conformal architecture, are natural next targets.  On the learning side, an approximation theory for conformal symplectic neural operators, the treatment of moving and deforming geometries through time-dependent $\chi$, and the extension to dissipative fluid models could also be pursued.

\section*{Acknowledgments}
This work was supported by the JST CREST program (JPMJCR24Q5) on operator learning based on geometric classical field theory and infinite-dimensional data science, the JST ASPIRE program (JPMJAP2329) on deep scientific computing, the JST Moonshot program (JPMJMS24A3), the Maths4DL EPSRC Programme on the Mathematics of Deep Learning (EP/V026259/1), the UKRI Horizon Europe Guarantee scheme REMODEL (EP/Y032144/1), and the UKRI Strength in Places Fund MyWorld Project (SIPF00006/1).


\end{document}